\theoremstyle{plain}
\newtheorem{theorem}{Theorem}
\newtheorem{lemma}[theorem]{Lemma}
\theoremstyle{definition}
\newtheorem{remark}[theorem]{Remark}
\newtheorem*{remark*}{Remark}
\newcommand{\pr}{\mathbb{P}}
\newcommand{\E}{\mathbb{E}}
\newcommand{\Z}{\mathbb{Z}}
\title{A model of discrete interacting updates}
\author[Denisov]{Denis Denisov}
\address{Department of Mathematics, University of Manchester, UK}
\email{denis.denisov@manchester.ac.uk}
\author[Shneer]{Seva Shneer}
\address{School of MACS, Heriot-Watt University}
\email{v.shneer@hw.ac.uk}
\author[Wachtel]{Vitali Wachtel}
\address{Faculty of Mathematics, Bielefeld University, Germany}
\email{wachtel@math.uni-bielefeld.de}
\keywords{Growth models with interaction, reflected random walks in orthants, positive recurrence, Lyapunov functions, mean-field limits, travelling waves}
\subjclass{60J10,60G50}
\thanks{Funded by the Deutsche Forschungsgemeinschaft (DFG, German Research
Foundation) –
Project-ID 317210226 – SFB 1283.}
\begin{document}

\begin{abstract}
We consider $N$ counters taking integer values which are subject to the following dynamics. At every time, a pair of distinct counters is chosen uniformly at random and their states are updated according to the following rule. If the states are different, then the smaller one is increased by $1$, while if the states are the same, both of them are increased by $1$. We show that, for a fixed $N$, the distances between consecutive ordered counters form a positive recurrent Markov chain and there exists the speed $V(N)$ defined as the average number of counters updated per time step in the stationary regime. We provide non-trivial upper and lower bounds for $V(N)$ as $N\to \infty$. Despite the simple formulation of the problem, its analysis seems to be highly complicated. We also provide a list of open problems and discuss various methods one may want to use, and obstacles one encounters. 
\end{abstract}

\maketitle

\section{Introduction}

We consider the following model. There are $N$ counters that take integer values. At every time step a pair of distinct counters is selected uniformly at random, independently over steps. If states of the selected counters are different, the lower one is increased by $1$. If the states are equal, both of them are increased by $1$. Therefore, the total increase per step is random and takes values $1$ or $2$. We are interested in the average increase per step. We show that for every fixed $N$ this is well defined, and denote it by $V(N)$. We study the behaviour of this quantity as $N \to \infty$.

A simple observation is that, if the system starts from any configuration with at least two counters at the top level, then at any time step there are at least $2$ counters at the top level. Thus, it is obvious that $V(2)=2$. This observation also implies that the evolution of the system with $N=3$ may be described by a one-dimensional Markov chain tracking the distance of the lowest counter from the top level. Let us denote this distance after time step $t$ by $X(t)$. It is easy to see that $X(t)$ is a reflected random walk with transition probabilities given by
$$
\pr(X(t+1)=1|X(t)=0)=1,
$$
$$
\pr(X(t+1)=k+1|X(t)=k)=\frac13 
$$
and
$$
\pr(X(t+1)=k-1|X(t)=k)=\frac23
$$
for all $k\ge 1$.

In particular, one can see that this Markov chain is positive recurrent. Its stationary distribution is given by $\pi(0)=1/4, \pi(k) = 3/2^{k+2}, k\ge 1$. This implies that 
$$
V(3)= 2 \pi(0) + \frac43 (1-\pi(0)) = \frac32,
$$
as the speed in the configuration with all counters at the top level is equal to $2$, while in any other configuration it is equal to $4/3$.

In the case $N=4$ the evolution of the system may be described by a two-dimensional random walk in the positive quadrant reflected at its boundaries which tracks the distances from the top two counters to the third largest one and between the third largest one and the fourth (lowest) one. In this case one can also write down transition probabilities but the stationary distribution does not seem to have a closed-form expression. We will treat this case in Subsection~\ref{sebsec:N=4} where we show, in particular, that 
$$ 
\frac{26}{19} \le V(4) \le \frac{10}{7}.
$$
The same interpretation is possible for any $N$. More precisely, one can track distances between consecutive counters, starting from the second-highest one (which is at the same level as the highest one). As a result, we will obtain a reflecting random walk in the positive orthant of the $(N-2)$-dimensional space. The complexity of transition probabilities however grows rapidly with dimension and we do not expect there to be a way to treat these analytically. In particular, transition probabilities are different in each part of the reflecting boundary.

We show (see Theorem~\ref{thm:positive.recurrent}) that for every $N$, the Markov chain described above is positive recurrent and thus $V(N)$ is well defined for every $N$. We are interested in the behaviour of $V(N)$ as $N \to \infty$. Numerical experiments strongly suggest that $V(N)$ is a decreasing function of $N$. Thus $V(N)$ has a limit $V(\infty)$, and numerical experiments suggest that 
$$
V(\infty) \approx 1.242.
$$

It is still unclear to us how to prove these properties of $V(N)$, and these present exciting open problems. In this paper, we show in Theorem~\ref{thm.lower.bound} that
$$
\liminf_{N\to \infty} V(N) \ge 1 + \frac{\sqrt{3}}{27} \approx 1.064
$$ 
and we show in Theorem~\ref{thm.upper.bound} that
$$
\limsup_{N\to \infty} V(N) \le \frac{34}{27} \approx 1.259.
$$
The lower bound in particular implies that there is a clustering of counters in some levels, that is, the reflecting random walk spends an asymptotically non-vanishing proportion of time at boundaries.

Another possible approach is to consider tails of empirical measures (or proportions of counters above every fixed level)
$$
\psi^{(N)}_k(t) = \frac{1}{N}\sum_{i=1}^N I\left(C_i\left(\lfloor tN\rfloor\right)\ge k\right),
$$
where $C_i(s)$ is the level of counter $i$ at time $s$ and $\lfloor s \rfloor$ denotes the integer part of $s$. Using methods that have become standard (see~\cite{VvedenskayaDobrushinKarpelevich1996}), one can show that $\{\psi^{(N)}_k\}_{k\ge 0}$ converge, as $N \to \infty$, to the solutions of an infinite system of ODEs
$$
\psi_0(t)=1 \text{ for all } t\ge 0,
$$
\begin{equation} \label{eq:main_diff_eq}
\psi'_{k+1}(t) = 2\psi_k(t)(\psi_k(t)-\psi_{k+1}(t)) \text{ for } t\ge 0, k\ge 0
\end{equation}
with initial conditions
$$
\psi_k(0)=0 \text{ for } k\ge 1.
$$
Despite the availability of a recursive solution, it is not clear how one can exploit it to gain insight into the behaviour of the system as $k \to \infty$. We believe that, and numerical experiments strongly support this hypothesis, there exist a constant $c$ and a decreasing function $H$ such that
$$
\psi_k\left(\frac{k+x}{c}\right) \to H(x)
$$
as $k \to \infty$. This function $H$ should be a travelling-wave solution to the system of equations above such that it is decreasing and satisfies
$$
cH'(x) = H(x+1)(H(x+1)-H(x)), \quad H(-\infty)=0, H(\infty)=1.
$$
Usually in dealing with travelling-wave solutions, the value of $c$ is either known apriori or may be derived from the equation satisfied by the travelling wave. One is then interested in the properties of $H$.

The authors of \cite{PolterovichHenkin1988} consider an infinite system of differential equations that leads to the equation
\begin{equation} \label{eq:TW_HP}
cH'(x) = (a+bH(x))(H(x+1)-H(x)),
\end{equation}
where $c$ is the initially unknown speed and $a$ and $b$ are parameters of the model. Dividing both sides by $a+bH(x)$ and integrating, then taking the limit as $x\to \infty$, they are able to derive the exact value of $c$. They also derive the closed-form expression of $H$ satisfying the differential equation. In  further papers \cite{HenkinPolterovich1991,HenkinPolterovich1994}, they find the speed and show existence of a travelling-wave solution in a more general model. It follows from results of \cite{PolterovichHenkin1988} and \cite{HenkinPolterovich1991} that $H$ has exponential tails. This is a behaviour also observed in travelling-wave solutions to the Burgers equation. One can also see similarities between the Burgers equation and \eqref{eq:TW_HP}.

Papers \cite{PolterovichHenkin1988} and \cite{HenkinPolterovich1991} do not consider any stochastic models and deal directly with infinite systems of differential equations. 

A stochastic model where one can prove the existence of a travelling-wave solution and find the exact value of its speed appears from a simplification of our model. If we assume that, upon finding two counters with the same level, we increase only one of them, say chosen uniformly at random. This model describes the growth of queues in the so-called power-of-$2$ queueing system (see \cite{VvedenskayaDobrushinKarpelevich1996}) without service. It can be seen that the speed $c$ and the travelling wave $H$ satisfy
\begin{equation} \label{eq:TW_power_of_2}
cH'(x) = H^2(x+1)-H^2(x).
\end{equation}
By integrating the equation above and using boundary conditions, one can derive that $c=1$, which agrees with the intuition. The same equation has been studied in \cite{BenNaimKrapivsky2012} who argued that one of the tails of $H$ should be doubly exponential and the other exponential. A double exponential tail has been observed in travelling-wave solutions to the F-KPP equation, see~\cite{McKean1975}.  

In our case, although the differential equations seem to be very similar to the ones leading to exact expressions for $c$, it does not appear to be possible to find the value of the speed. We thus also lack techniques to study the behaviour of $H$. Numerical experiments suggest that $c=V(\infty)$.

The problem has originated in engineering applications, see, e.g., \cite{bianchi2012modeling} and references therein.

\section{Main results and discussions}

In this section, we first formulate our main results. We then discuss the case $N=4$ that illustrates the technical difficulties one faces when dealing with this model. We then describe our approach in the general case. We finish this section with a list of open problems related to this model, including the mean-field approach that may initially look promising for dealing with the case $N\to \infty$. We describe the obstacles in this approach we were not able to overcome.

\subsection{Main results}

We will assume that the system starts from a configuration such that there are at least $2$ counters at the highest level (this does not restrict the generality as such a state will be reached in a bounded time with probability $1$). In this case, as is easy to see, there are always at least two counters at the highest level.
Let $X_j$ be the distance between counters $j+2$ and $j+1$, assuming they are ordered from largest to smallest. 
Markov chain $X=(X(t)=(X_1(t),\ldots, X_{N-2}(t)))_{t\ge 0}$ fully describes the evolution of the system. The theorem below shows positive recurrence of $X$ which, in particular, allows us to express the time-average increase $V(N)$ as the average increase with respect to the stationary distribution.
\begin{theorem}\label{thm:positive.recurrent}
For each $N\ge 3$, $X$ is a positive recurrent  Markov chain  with stationary distribution $\pi$.
\end{theorem}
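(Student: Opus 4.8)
The plan is to verify the Foster--Lyapunov criterion for positive recurrence: show that $X$ is irreducible on $\Z_{\ge 0}^{N-2}$ and exhibit a function $\Phi\ge 0$, with bounded jumps on sublevel sets, whose one-step drift is $\le -\eps$ outside a finite set. Irreducibility is elementary: from any configuration the chain reaches the single-level state $X=0$ with positive probability---repeatedly draw (pairs of) the counters at the lowest occupied level, each such round strictly decreasing the number of occupied levels---and from $X=0$ it reaches any prescribed state along an explicit finite sequence of draws; hence all states communicate.

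The natural Lyapunov function is the total deficit from the top, $\Phi(x)=\sum_{i=1}^N\bigl(C_{(1)}(x)-C_{(i)}(x)\bigr)=NC_{(1)}(x)-\sum_i C_i(x)$. Since $C_{(1)}$ increases (by $1$) exactly when the drawn pair lies in the top level, while $\sum_i C_i$ increases by $1$ plus the indicator that the drawn pair is tied, writing $m_\ell$ for the occupancy of level $\ell$ and $m_1$ for that of the top level one obtains
\[
\E[\Phi(X(t{+}1))-\Phi(X(t))\mid X(t)=x]=\frac{N\binom{m_1}{2}-\sum_\ell\binom{m_\ell}{2}}{\binom N2}-1\ \le\ \frac{m_1(m_1-1)}{N-1}-1 .
\]
Hence $\Phi$ has drift $\le -\eps$ on $\{x:m_1(x)\le m^\ast\}$ as soon as $(m^\ast)^2<N$, and the only obstruction to the elementary criterion is the infinite ``degenerate'' family $B=\{x:m_1(x)>m^\ast\}$, in which a large fraction of counters sits at the top level. (The mirror-image quantity $\sum_i(C_{(i)}(x)-C_{(N)}(x))$, the range $C_{(1)}-C_{(N)}$, their maximum, and weighted gap sums $\sum_j w_jX_j$ each acquire a non-negative one-step drift on some analogous infinite family, so no obvious closed-form Lyapunov function works.)

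The point to exploit is that $B$ is left quickly and with bounded damage: the only move that decreases $m_1$ is a draw of two top-level counters, which raises $C_{(1)}$. Thus throughout a sojourn in $B$ the value $C_{(1)}$ is frozen and $\sum_i C_i$ is non-decreasing, so $\Phi$ is non-increasing, decreasing by at least $1$ at each step except the last; the sojourn ends as soon as a top pair is drawn, which has probability at least $\binom{m^\ast}{2}/\binom N2$ at every step (so it lasts $O(N)$ steps in expectation), and at that exit step $\Phi$ increases by exactly $N-2$ while $m_1$ falls back to $2$. I would therefore invoke a state-dependent (multistep) drift criterion, as in Meyn--Tweedie or Fayolle--Malyshev--Menshikov: take $\tau(x)=1$ on $B^c$ and, on $B$, the exit time followed by enough further steps in $B^c$ for the negative drift there to absorb the $N-2$ jump, and then verify $\E_x[\Phi(X(\tau(x)))]\le\Phi(x)-\eps\,\E_x[\tau(x)]$ off a finite set. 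Positive recurrence then yields the stationary $\pi$, and the ergodic theorem identifies $V(N)$ with $\E_\pi\bigl[1+I(\text{the drawn pair is tied})\bigr]$.

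The hard part is exactly this last verification. One must show that after an excursion into $B$ the chain necessarily spends enough time with a small top cluster---and hence genuine negative drift of $\Phi$---before it can climb back into $B$, and one must control the interplay with the symmetric degeneracy at the bottom (three or more counters at the lowest occupied level, which freezes $C_{(N)}$ and likewise produces a non-negative one-step drift of the candidate functions). Because all transition probabilities depend on the full occupancy profile, these excursion estimates are genuinely delicate, and this is where the ``highly complicated'' nature of the model referred to above already makes itself felt.
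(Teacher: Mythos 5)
Your proposal is incomplete precisely where you say it is: the multistep (state-dependent) drift verification that you label ``the hard part'' is left undone, and it is genuinely nontrivial. You correctly compute the one-step drift of $\Phi=NC_{(1)}-\sum_iC_i$ and correctly note that it is non-negative whenever the top level is overpopulated, but the excursion argument you sketch (bound the net change of $\Phi$ over a visit to $B$, then show the chain subsequently spends ``enough'' time with a small top cluster) requires quantitative estimates that you do not supply, and your own back-of-the-envelope numbers (sojourn of order $N$ steps, net jump of $N-2$ at exit) sit right on the boundary of what such an argument can absorb. As written, this is a plan, not a proof.

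The paper's proof is much simpler because it works in the gap coordinates and uses a \emph{quadratic} Lyapunov function, $L(x)=\sum_{j=1}^{N-2}x_j^2$. The key lemma you missed is a uniform sign condition on the individual gap drifts: if $x_j\ge 1$ then
\[
\E_x[\Delta X_j]\le -\frac{1}{\binom{N}{2}},
\]
which is read off directly from the middle column of Table~\ref{table:expectations} (the gap $X_j$ is strictly positive exactly when $j=l_i-1$ for some level boundary, and in every case the entry $\E[\Delta X_{l_i-1}]$ is at most $-1/\binom{N}{2}$). With this in hand,
\[
\E_xL(X(1))-L(x)=2\sum_j x_j\,\E_x[\Delta X_j]+\E_x\sum_j(\Delta X_j)^2\;\le\;-\frac{2\max_j x_j}{\binom{N}{2}}+4\;\le\;-1
\]
once $\max_j x_j\ge \tfrac54N(N+1)$, and Foster--Lyapunov applies with no excursion analysis at all. (The paper also gives an alternative with $\sum_j e^{rx_j}$.)

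Conceptually, the obstruction you ran into is structural: your $\Phi$ equals $\sum_j(N-1-j)X_j$, a function \emph{linear} in the gaps, so its one-step drift depends only on the configuration $\alpha$ and is constant on each region $C_\alpha$, with no way to grow more negative as the gaps themselves grow. Some regions then inevitably have non-negative drift, and you are forced into delicate multistep/excursion arguments. A quadratic function accumulates drift proportional to $\max_j x_j$, which is exactly what makes the one-step criterion succeed uniformly. If you want to keep your approach, you would need to actually prove the excursion inequality $\E_x[\Phi(X(\tau(x)))]\le\Phi(x)-\eps\,\E_x[\tau(x)]$; switching to the quadratic (or exponential) Lyapunov function in the gap coordinates avoids the issue entirely.
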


It is worth to mention that the recurrence/transience classification for general random walks in orthants is a complicated task, see \cite[Part II]{Borovkov1998}.

Our main results concern upper and lower bounds for $V(N)$ when $N$ is large.

\begin{theorem}\label{thm.upper.bound}
As $N\to \infty$, 
\[
V(N)\le (1+o(1))
\left(
1+\frac{7}{27}
\right).
\]
\end{theorem}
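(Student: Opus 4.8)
\emph{Reduction.} Write $n_j(t)$ for the number of counters at level $j$ after step $t$. The per-step increment of $\sum_i C_i$ equals $2$ exactly when the sampled pair sits at a common level and $1$ otherwise, so Theorem~\ref{thm:positive.recurrent} gives
\[
V(N)=1+\frac{\E_\pi\big[\sum_j n_j(n_j-1)\big]}{N(N-1)},
\]
and by exchangeability of the counter labels under $\pi$ this is $1+\pr_\pi(C_1=C_2)$, the stationary probability that two fixed counters share a level. So the task is to show $\E_\pi[\sum_j n_j^2]\le(\tfrac7{27}+o(1))N^2$. I would first record that the extreme levels are harmless: applying stationarity to the height of the top level above a fixed counter (a shift-invariant functional with finite stationary mean) shows the maximum advances at speed $V(N)/N$, hence $\E_\pi[m(m-1)]=(N-1)V(N)$ where $m$ is the top occupancy, so $\E_\pi[m^2]=O(N)$ and the top contributes $o(N^2)$; a similar, easier bound handles the lowest levels. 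All the content is in the bulk.

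\emph{The drift identity.} Since $X$ is positive recurrent, $\E_\pi[\text{(one-step drift of }f)]=0$ for every functional $f$ of the configuration with $\E_\pi|f|<\infty$. Because the dynamics near a level is quadratic in the occupancies, the natural shift-invariant choices $f=\sum_j n_j^2$ or $f=\#\{\text{pairs at distance}\le 1\}$ yield, after a direct computation, identities of the schematic form
\[
\E_\pi\Big[\textstyle\sum_j n_j\big((n_{j+1}-n_j)(B_j-1)+n_j+B_j-2\big)\Big]=0,\qquad B_j=\textstyle\sum_{\ell\ge j}n_\ell,
\]
which rearranges into an \emph{exact} formula for $\E_\pi[\sum_j n_j^2]$ in terms of the cubic quantity $\E_\pi[\sum_j n_j(n_{j+1}-n_j)B_j]$. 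Pointwise this cubic term is of order $N^3$ and is not sign-definite; its stationary expectation is only of order $N^2$, but exploiting that cancellation naively would amount to knowing $\pi$. The way I would escape this is to rerun the computation for the richer family $f=\sum_{i\ne i'}\rho^{|C_i-C_{i'}|}$, a bounded shift-invariant functional for each $\rho\in(0,1)$: the resulting identity links $\pr_\pi(C_1=C_2)$ to the weighted pair-correlations $\sum_{i<i'}\rho^{C_{i'}-C_i}$, and one arranges the combination (possibly of several values of $\rho$) so that the offending cubic piece either telescopes or is dominated by a convexity/Cauchy--Schwarz estimate on the occupancy profile. What should survive is a genuine quadratic constraint on the stationary profile $(p_j)=(\E_\pi n_j/N)$, forcing $\sum_j p_j^2\le\tfrac7{27}$.

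\emph{Optimisation and bookkeeping.} From that constraint I would extract the bound by maximising $\sum_j p_j^2$ over probability profiles subject to it; the extremiser is explicit and the maximum equals $\tfrac7{27}$ — the $27$ being the fingerprint of the cubic structure of the drift identities — whence $V(N)\le\tfrac{34}{27}+o(1)$. The remaining points (replacing $n_j(n_j-1)$ by $n_j^2$, passing from the discrete profile to the continuum optimisation, and optimising over $\rho$) are routine and absorbed by the $o(1)$.

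\emph{Main obstacle.} The step I am least sure how to carry out, and which I expect to be the crux, is the middle one: choosing the test function (or finite combination of test functions) so that the cubic cross-terms in the drift identity cancel or become controllable by a soft inequality. This is precisely where a direct attack would require information about the stationary measure that is not available, and the whole difficulty is to reach the constant $\tfrac7{27}$ while bypassing it.
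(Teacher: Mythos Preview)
Your proposal is not a proof but a plan with a self-acknowledged gap at exactly the decisive step. You write that you do not know how to choose the test function so that the cubic cross-terms in the drift identity become controllable; that \emph{is} the entire content of the theorem, and until it is done you have no bound at all, let alone the constant $\tfrac{7}{27}$.

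There is also a structural problem with the endgame you sketch. You reduce to bounding $\E_\pi\big[\sum_j n_j^2\big]$ and then propose to bound instead $\sum_j p_j^2$ with $p_j=\E_\pi n_j/N$. Jensen goes the wrong way: $\sum_j (\E_\pi n_j)^2\le \E_\pi\big[\sum_j n_j^2\big]$, so a bound on the averaged profile says nothing about the quantity you need. Any argument that passes through $(p_j)$ must be replaced by a \emph{pointwise} (configuration-by-configuration) inequality, valid for every occupancy sequence simultaneously, before one takes $\E_\pi$.

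That is precisely what the paper does, and the successful test function is not of the type you suggest. The paper works with the ordered-gap variables $X_k$ (distance between the $(k{+}1)$-st and $(k{+}2)$-nd largest counters) and takes the explicit quadratic
\[
f(k)=\frac{k+1}{2}-\frac{k^2-1}{2(N-2)}.
\]
One then verifies, for \emph{every} configuration $\alpha$, the pointwise inequality
\[
V_\alpha(N)\ \le\ v_f+\sum_{k}f(k)\,\E[\Delta X_k\mid\alpha],
\]
so that averaging over $\pi$ kills the drift term and yields $V(N)\le v_f$. The verification is reduced, via two combinatorial moves on configurations --- merging two adjacent levels when their combined size is at most $(N{+}1)/2$, and rebalancing them to near-equal sizes otherwise --- to a short list of extremal configurations with at most three large blocks (possibly trailed by a level of size $1$ or $2$). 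Asymptotically the worst of these is $(N/3,N/3,N/3)$, which gives $v_f=1+\tfrac{7}{27}+o(1)$. Your intuition that the $27$ is ``the fingerprint of the cubic structure'' is correct, but it emerges from this three-equal-blocks extremiser rather than from an optimisation over averaged profiles or a family of exponential pair-correlations.
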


\begin{theorem} \label{thm.lower.bound}
As $N\to \infty$, 
\[
V(N)\ge (1+o(1))
\left(1+\frac{\sqrt{3}}{27}
\right).
\]
\end{theorem}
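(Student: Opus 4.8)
The plan is to convert the statement into a lower bound on how clustered the stationary configuration is. Since one step increases $\sum_i C_i$ by $1$ when the sampled pair sits at distinct levels and by $2$ when it sits at a common level, $V(N)=1+\E_\pi[q_N]$ with $q_N=\binom N2^{-1}\sum_\ell\binom{n_\ell}2$ the probability that a uniformly sampled pair is a tie; so the theorem is equivalent to $\liminf_N \binom N2^{-1}\E_\pi\big[\sum_\ell\binom{n_\ell}2\big]\ge \sqrt3/27$. I would also record the conservation identity obtained by equating the two expressions for the zero mean drift (under $\pi$) of $M(t)-\bar C(t)$, the running maximum minus the average of the counters: this difference equals $N^{-1}\sum_{j}(N-1-j)X_j$, hence is a function of the gap chain of Theorem~\ref{thm:positive.recurrent}; its drift contribution from $M$ is $\binom{n_M}2/\binom N2$ (the maximum moves only on a coincidence at the top), and from $\bar C$ it is $(1+q_N)/N$, giving $V(N)=\E_\pi[n_M(n_M-1)]/(N-1)$. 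Thus it also suffices to prove $\liminf_N N^{-1}\E_\pi[n_M^2]\ge 1+\sqrt3/27$. Both reformulations rely on integrability of $\sum_j jX_j$ under $\pi$, which I would obtain first from a linear Foster–Lyapunov estimate refining the drift bound behind Theorem~\ref{thm:positive.recurrent}. I would \emph{not} route the argument through the mean-field ODEs \eqref{eq:main_diff_eq}: the required interchange of the $N\to\infty$ and $t\to\infty$ limits is exactly the open problem flagged above.

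To get the lower bound I would analyse the occupation numbers $n_M,n_{M-1},n_{M-2},\dots$ of the levels at the frontier, and weighted combinations $W_\lambda=\sum_{k\ge0}\lambda^k n_{M-k}$ with $\lambda\in(0,1)$; each is a function of the stationary chain, so has zero mean drift. Computing the one-step drift is routine once one splits events into a coincidence at the top (which shifts the reference level by one and leaves exactly two counters at the new top), a coincidence at level $M-k$ (which pushes two counters up), and a non-coincidence pairing (which pushes the lower of the two counters up). This yields exact identities relating $\E_\pi[n_M^2]$, $\E_\pi[n_Mn_{M-1}]$, the reset term $\E_\pi[\binom{n_M}2W_\lambda]$ and the inflow terms $\E_\pi[\sum_k\lambda^k n_{M-k}N_{>M-k}]$, where $N_{>\ell}$ is the number of counters strictly above level $\ell$; the top level is fed at rate of order $(n_{M-1}^2+n_{M-1}n_M)/N^2$ and reset at rate of order $n_M^2/N^2$. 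To close the system one must bound the inflow into the active band from below, and here the structural input is that counters never descend and a counter at the current minimum advances whenever it is sampled, so the level just below the top cannot be starved. I would make this quantitative by a stochastic-domination comparison in which the configuration below the band is replaced by a reservoir feeding it at a rate governed by a free parameter, obtaining a self-contained inequality of the form $\E_\pi[n_M^2]\ge N\,(1+g(\lambda,\alpha)+o(1))$; the value $\sqrt3/27=3^{-5/2}$ should then come out as $\max_{\lambda,\alpha}g$, the square root appearing when one extracts the positive root in the growth-versus-reset balance and optimises.

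The step I expect to be the main obstacle, and the reason the constant will fall well short of the conjectured $V(\infty)\approx1.242$, is the control of the inflow from the structureless bulk: the frontier levels do not evolve autonomously, every drift identity refers one level deeper, the occupancy of $M-1$ that drives the growth of the top is itself governed by the whole pile, and there is no monotone coupling that cleanly decouples the active band from the bulk. The device of replacing the bulk by a worst-case reservoir is crude, and it is precisely there that $1.242$ degrades to $1.064$; a sharper treatment of the replenishment would be needed to close the gap with Theorem~\ref{thm.upper.bound}. A secondary, purely technical, point that I would dispose of first is the justification of the stationary drift identities (interchange of expectation and one-step increment, and finiteness of the relevant polynomial moments of the $X_j$ under $\pi$), all of which follow from Foster–Lyapunov arguments with linear and quadratic test functions built on the estimates already used for Theorem~\ref{thm:positive.recurrent}.
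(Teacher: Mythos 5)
Your opening reformulation is correct: $V(N)=1+\E_\pi[q_N]$ matches the paper's $V(N)=\sum_\alpha\pi_\alpha V_\alpha(N)$ with $V_\alpha(N)=1+\sum_i\alpha_i(\alpha_i-1)/(N(N-1))$, and the drift identity $V(N)=\E_\pi[n_M(n_M-1)]/(N-1)$ obtained by balancing the drift of the running maximum against the drift of the average is a nice observation (the polynomial-moment integrability under $\pi$ it needs is indeed available from Foster--Lyapunov arguments). From that point on, however, what you offer is a programme rather than a proof, and the missing steps are precisely where the theorem lives. (i) You never compute the one-step drift of $W_\lambda=\sum_k\lambda^k n_{M-k}$, and the resulting identities do not close by themselves — each equation refers one level deeper, as you concede. (ii) The ``stochastic-domination'' replacement of the bulk by a fixed-rate reservoir is asserted, not constructed: no monotone coupling is exhibited, and it is not clear such a truncation gives a \emph{lower} bound on $\E_\pi[n_M^2]$, since the inflow into level $M-1$ is fed by a pile whose own occupancies are correlated with $n_M$. (iii) The claim that the specific value $3^{-5/2}$ ``should then come out'' of maximising $g(\lambda,\alpha)$ is pure hope; nothing in your sketch ties the growth-versus-reset balance to a cubic whose stationarity point involves $\sqrt3$.

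The paper's proof never touches the frontier-occupation moments. It works with the test-function framework of Section 2.3: choose $g$ on the gap chain so that $v_g+\sum_k g(k)\,\E[\Delta X_k\mid X(0)=x]\le V_\alpha(N)$ for \emph{every} configuration $\alpha$, whence $V(N)\ge v_g$. It takes the quadratic $g(x)=-\frac{x^2-1}{2(N-2)}+B(x+1)$, and then — this is the real work — runs the merging/rebalancing monotonicity of Section~\ref{sec:reduction} in reverse to reduce the pointwise check to two extremal families: all singletons, and one large level surrounded by singletons. The singleton case gives $v_g\le 2/3+B$; the one-large-level case gives, after a two-variable optimisation over the position $y$ and size $z$ of the large level, $v_g\le 2/3+B-\frac43(3B-1)^3$; maximising this over $B\in[3/7,1/2]$ at $12(3B-1)^2=1$ yields $B=\frac13+\frac{\sqrt3}{18}$ and the constant $1+\sqrt3/27$. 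The square root thus comes from the stationarity of a cubic in $B$, not from the positive root of a quadratic ``growth-versus-reset'' balance; I do not see a route from your moment hierarchy to these extremal configurations, and without a proved domination lemma your reservoir step is a gap, not a simplification.
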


\begin{remark}\label{rem.finite.case}
We also have the following non-asymptotic upper bounds for 
$N\ge 5$:
\[
V(N)\le 1+
\begin{cases}
\dfrac{N(7N-9)}{27\,(N^{2}-3N+2)}, & N\equiv0\pmod3,\\[8pt]
\dfrac{7N^{2}-2N-5}{27\,N(N-2)}, & N\equiv1\pmod3,\\[8pt]
\dfrac{7N^{3}-9N^{2}-3N+13}{27\,N(N^{2}-3N+2)}, & N\equiv2\pmod3.
\end{cases}
\]
We prove these in the appendix. We believe that one can also construct non-asymptotic lower bounds by following a similar strategy of analysing worst-case scenarios.
\end{remark}


\subsection{Case $N=4$}\label{sebsec:N=4}
Before turning to proofs, we will discuss in detail the case $N=4$ which leads to a two-dimensional reflected random walk and demonstrates the difficulties one encounters in the analysis of this model even in a relatively low dimension. Random walks in the positive quadrant is a popular topic in probability and combinatorics, we refer the reader to \cite{MR3617630} for an extensive exposition of results in the area.

The transition rules for the Markov chain $(X_1(t),X_2(t))$ are different depending on the current position. Let $p((i,j),(k,l))$ denote  the transition probability from $(i,j)$ to $(k,l)$. Then, on the boundaries we have
$$
p((0,0),(1,0))=1,
$$
$$
p((k,0),(k+1,0)) = p((k,0),(k-1,0))=\frac16,\quad p((k,0),(k-1,1))=\frac23, \quad k\ge 1,
$$
$$
p((0,l),(0,l-1)) = p((0,l),(1,l))=\frac12, \quad k\ge 1,
$$
while in the interior
$$
p((k,l),(k+1,l))=\frac16, \quad p((k,l),(k,l-1))=\frac12, \quad p((k,l),(k-1,l+1))=\frac13
$$
for all $k,l\ge 1$. The figure below provides an illustration of these transition probabilities.
\vspace{5mm}

\begin{tikzpicture}[>=stealth, thick, scale=1.5]

\draw[->] (0,0) -- (0,3) ;
\draw[->] (0,0) -- (6,0) ;

\fill (0,2) circle (2pt);
\draw[->] (0,2) -- (0.8,2) node[midway,above] {$\tfrac12$};
\draw[->] (0,2) -- (0,1.2) node[midway,left] {$\tfrac12$};

\fill (4,0) circle (2pt);
\draw[->] (4,0) -- (4.8,0) node[midway,above] {$\tfrac16$};
\draw[->] (4,0) -- (3.2,0) node[midway,above] {$\tfrac16$};
\draw[->] (4,0) -- (3.4,0.6) node[midway,above] {$\tfrac23$};

\fill (3,2) circle (2pt);

\draw[->] (3,2) -- (3.8,2) node[midway,above] {$\tfrac16$};
\draw[->] (3,2) -- (3,1.2) node[midway,right] {$\tfrac12$};
\draw[->] (3,2) -- (2.4,2.6) node[midway,left] {$\tfrac13$};

\fill (0,0) circle (2pt);
\draw[->] (0,0) -- (0.8,0) node[midway,above] {$1$};

\end{tikzpicture}
\vspace{5mm}

Let us denote stationary probabilities of being in different parts of the space as
\begin{align*}
& \Pi_0 = \pi((0,0)),\\
& \Pi_1 = \pi(\{(k,0),k\ge 1\}),\\
& \Pi_2 = \pi(\{(0,l),l\ge 1\}),\\
& \Pi_3 = \pi(\{(k,l),k,l\ge 1\}).
\end{align*}
Equating the frequency of jumps left and right in the stationary regime, we obtain
$$
\Pi_0 + \frac16 \Pi_1 + \frac12 \Pi_2 + \frac16 \Pi_3 = \frac16 \Pi_1 + \frac23 \Pi_1 + \frac13 \Pi_3,
$$
which simplifies to
\begin{equation} \label{eq:4_stationary_1}
\Pi_0 - \frac23 \Pi_1 + \frac12 \Pi_2 - \frac16 \Pi_3 = 0.
\end{equation}
Similarly, equating frequencies of jumps up and down, we obtain
$$
\frac23 \Pi_1 + \frac13 \Pi_3 = \frac12 \Pi_2 + \frac12 \Pi_3,
$$
which gives
\begin{equation} \label{eq:4_stationary_2}
\frac23 \Pi_1 - \frac12 \Pi_2 - \frac16 \Pi_3 = 0.
\end{equation}
Note that the average number of updated counters in any of the $4$ regions is constant. Then the speed may be written as
$$
V(4) = 1 + \Pi_0 + \frac13 \Pi_1 + \frac12 \Pi_2 + \frac16 \Pi_3.
$$
Together with the requirement that the sum of all probabilities is equal to $1$, equations \eqref{eq:4_stationary_1} and \eqref{eq:4_stationary_2} provide a way to express the speed in terms of one of the $\Pi$'s. For instance,
$$
V(4) = \frac{10}{7} - \frac27 \Pi_0,
$$
leading to the estimate
$$
V(4) \le \frac{10}{7} \approx 1.4286,
$$
which turns out to be the best possible upper bound arising from this system. The expression for $V(4)$ above also immediately implies $V(4) \ge 8/7$.
It turns out that a more careful treatment of the system allows one to improve this to
$$
V(4) \ge \frac{26}{19} \approx 1.3684.
$$
Unfortunately, it does not seem to be possible to calculate $V(4)$ exactly. In particular, there does not seem to be a closed-form expression for the stationary distribution of this two-dimensional Markov chain. Numerical simulations suggest that $V(4) \approx 1.3958$.

It is clear that, for an arbitrary $N$, the number of regions with different transition probabilities is $2^{N-2}$, while the number of equations connecting them is $N-1$. It would thus appear that looking for estimates similar to those explained above should be a hopeless task. However, we will demonstrate that non-trivial bounds for the speed may nevertheless be constructed via appropriate test functions.

\subsection{Description of our approach}
In order to classify the various regions, we will use the following simplified formulation of the state space which includes all the information needed for the calculation of the speed.

Let $x=(x_1,\ldots,x_{N-2})$ be 
a position of Markov chain $X$. 
We assign a configuration 
$\alpha=(\alpha_1,\ldots,\alpha_M)$ to $x$ as follows.  

Suppose there are $M\ge 1$ distinct levels of counters, with  the following numbers of counters at different levels: 
$$
\begin{array}{ll}
00 \dots 0 & \alpha_1 \text{ counters} \\
00 \dots 000 & \alpha_2 \text{ counters} \\
000 \dots 00 & \alpha_3 \text{ counters} \\
\text{\ldots} & \text{ and so on}
\end{array}
$$
Here $\alpha_1 \ge 2$ and $\alpha_2,\alpha_3,\ldots, \alpha_M \ge 1$. 
Note that 
level $1$ is treated 
differently and $\alpha_1\ge 2$ because we always 
have at least two counters in level $1$. 

This formulation is indeed a simplification of the state space as we ignore the values of distances between different levels.

If two points $x$ and $y$ have the same configuration $\alpha$, then the increments of the Markov chain in these two points have the same distributions. 
Thus, for each configuration $\alpha$ there is a subset $C_\alpha$
of $\Z_+^{N-2}$, where $X$ behaves like a random walk.  
Let $\pi_\alpha = \pi(C_\alpha)$. Then, it is clear that 
the average number of counters that moved one step up 
    is given by 
\[
V(N)= \sum_{\alpha} \pi_\alpha V_\alpha(N),
\]
where 
\[
V_\alpha(N) = 1+\sum_{i=1}^M \frac{\alpha_i(\alpha_i-1)}{N(N-1)}
\]
is the average number of counters updated if $X$ is in configuration $\alpha$. Note that $1$ on the RHS above corresponds to the fact that there is always at least one update. The second term corresponds to the probability of choosing two counters from the same level, which is the only possibility to update $2$ counters.

We aim to find non-trivial upper and lower bounds for $V(N)$. 
For that we will construct test functions $f$ and $g$ and determine constants $v_f$ and $v_g$ satisfying
\[
v_f+\sum_{k=1}^{N-2} f(k) \E[X_k(1)-x|X(0)=x] \ge V_\alpha(N)
\]
and
\[
v_g+\sum_{k=1}^{N-2} g(k) \E[X_k(1)-x|X(0)=x] \le V_\alpha(N),
\]
for all $x$ in configuration $\alpha$. These inequalities imply that 
\[
V(N)\le v_f \quad \text{and} \quad V(N)\ge v_g.
\]
Indeed, 
\begin{align*}
V(N) &= \sum_{\alpha} \pi_\alpha V_\alpha(N) \le 
\sum_{\alpha} \pi_\alpha\left(v_f+\sum_{k=1}^{N-2} f(k) \E[X_k(1)-x_k|X_0=x]\right)\\
&= v_f + \sum_{k=1}^{N-2} f(k)\sum_{\alpha} \pi_\alpha \E[X_k(1)-x_k|X_0=x]\\
&= v_f +\E_\pi \sum_{k=1}^{N-2} f(k) 
\left(X_k(1)-X_k(0)\right)=v_f. 
\end{align*}
The lower bound follows similarly.

We will take  
\begin{equation}\label{defn.f}
f(x)=\frac{x+1}{2}-\frac{x^2-1}{2(N-2)}
\end{equation}
and
\begin{equation}\label{defn.g}
g(x)=\left(\frac13 +\frac{\sqrt{3}}{18}\right)(x+1)-\frac{x^2-1}{2(N-2)}.
\end{equation}

To implement this plan, we will compare different configurations and perform worst-case-scenario analysis. This does not require any complicated techniques.

\subsection{Open problems}

We finish this section by providing a list of open problems that we find exciting. In fact it would not be a strong exaggeration to say that all questions remain open.

\noindent {\bf 1. Monotonicity of speeds in $N$.} We have already alluded to the fact that numerical evidence strongly suggest that $V(N)$ is a decreasing function of $N$. This is an interesting open problem. If monotonicity was established, it would imply that $V(N)$ has a limit as $N\to \infty$ which, combined with our bounds, would mean that this limit is strictly larger than $1$. This would mean that the system spends a non-negligible amount of time on the boundaries - something not at all obvious intuitively. We were only able to show that $V(4)<V(3)$, and we did so analytically, using stationary distributions. It does not seem that there is a hope in generalising it. A more constructive, amenable to higher dimension proof, based on, e.g., trajectory-wise coupling, remains elusive even in this seemingly simple case.

\noindent {\bf 2. Exact expression for the limiting speed.} We do not have any heuristic argument that would lead to an expression for the speed. Numerically, it seems that $V(\infty) \approx 1.2422$.

Along with speed, there are many other unanswered questions related to the limiting system, including the asymptotic behaviour of the number of significant levels (those with many counters), and the distribution of counters over these levels.

\noindent {\bf 3. Mean-field approach.} We now discuss in more detail the infinite system of differential equations \eqref{eq:main_diff_eq}. A rigorous proof for the convergence of tails of empirical measures of the process to its solutions may be provided using techniques that became standard by now (we refer the reader to \cite{VvedenskayaDobrushinKarpelevich1996} and many follow-up papers). Heuristically it is straightforward to understand the differential equations as $\psi_{k+1}$ may only increase and does so by either $1$ or $2$ counters updating from the state $k$. Probabilities of these updates are also straightforward and one gets
\begin{align*}
\psi_{k+1}'(t) & = 2(\psi_k(t)-\psi_{k+1}(t))\psi_{k+1}(t) + 2(\psi_k(t)-\psi_{k+1}(t))^2 
\\ &= 2 \psi_k(t)(\psi_k(t)-\psi_{k+1}(t)).
\end{align*}
To simplify calculations, we will consider
$$
\phi_k(t)=\psi_k(2t)
$$
which satisfy equations
$$
\phi_{k+1}'(t) = \phi_k(t)(\phi_k(t)-\phi_{k+1}(t)).
$$
With the initial conditions, the system of equations has solutions following the recursive form
\begin{equation} \label{eq:solution2}
\phi_{k+1} (t)= \int_0^t \phi_k^2(s) \exp\left\{-\int_s^t \phi_k(u)du\right\}ds.
\end{equation}
It can be seen that
$$
\phi_1(t)=1-e^{-t}
$$
but the complexity of the solutions grows quickly and this exact recursive expression does not seem to be amenable to direct analysis.

We provide below a graph (see figure \ref{fig:example})  of some of the first $\phi_k$ which demonstrates that there may be a travelling-wave solution to the system of equations.

\begin{figure}[h!] 
    \centering
    \includegraphics[width=0.6\textwidth]{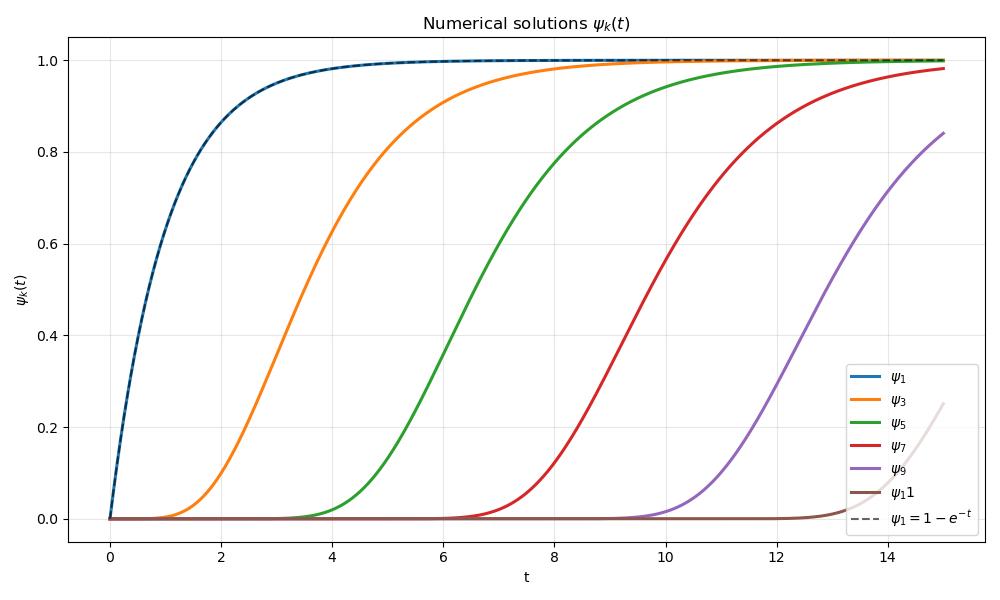} 
    \caption{Plots of $\phi_1,\phi_3,\phi_5,\phi_7, \phi_9$ and $\phi_{11}$}
    \label{fig:example}
\end{figure}

As we mentioned in the introduction, we were not able to identify the speed of this travelling-wave solution, or demonstrate its existence. We conjecture that one of the tails of the travelling-wave solution is exponential, while the other is double exponential.

\section{Preliminary computations}

 \subsection{Computation of $\E[\Delta X_{k} ]$}
In this section, we will perform the first step of our analysis and provide formulas for the expected increments of the distances between consecutive counters.

Recall that $\alpha_{i}$ denotes the number of counters at level $i$. Let also 
\[
l_{i}=\alpha_1+\cdots+\alpha_i.
\]
Then $l_i$ gives the cumulative  
number of counters at 
levels $1,2\ldots, i$. 

We will associate with  level $i$ variables $X_{l_{i-1}-1},\ldots, X_{l_i-2}$. 
We put $l_0=0$ and note 
that there numbering of random variables starts with $1$, that is there is no $X_0, X_{-1}$. This agreement is only for consistence of the notation for all levels including the first one. 

We will focus on computing
$\E[\Delta X_{l_{i}-1} ], \E[\Delta X_{l_{i}} ],\ldots, \E[\Delta X_{l_{i+1}-2} ]$ 
for each level $i+1$.

If $\alpha_1\ge 3$, then 
\begin{equation} \label{eq:level_1}
\E[\Delta X_1] = \frac{{\alpha_1 \choose 2}}{{N\choose 2}} \text{ and } \E[\Delta X_j] = 0 \text{ for all } j=2,\ldots,\alpha_1-2.
\end{equation}

For all lower levels, we have to distinguish between the following $9$ cases.
\begin{table}[h!]
\centering
\begin{tabular}{|c|c|c|c}
\hline
& Level $i+1$ & Level $i$   \\
\hline
1& $\alpha_{i+1}\ge 3$  & $\alpha_{i}\ge 3$ \\
2& $\alpha_{i+1}\ge 3$  & $\alpha_{i}=2$ \\
3& $\alpha_{i+1}\ge 3$  & $\alpha_{i}=1$  \\
\hline
4& $\alpha_{i+1}=2$  & $\alpha_{i}\ge 3$  \\
5& $\alpha_{i+1}=2$  & $\alpha_{i}= 2$  \\
6& $\alpha_{i+1}=2$  & $\alpha_{i}=1$   \\
\hline
7& $\alpha_{i+1}=1$  & $\alpha_{i}\ge 3$  \\
8& $\alpha_{i+1}=1$  & $\alpha_{i}=2$  \\
9& $\alpha_{i+1}=1$  & $\alpha_{i}=1$  \\
\hline 
\end{tabular}
\label{tab:example}
\end{table}

We now consider each case separately and determine corresponding expectations:
\begin{enumerate}[(1)]
\item 
\underline{
Case $\alpha_{i+1}\ge 3$ and $\alpha_{i}\ge 3$:
}
\begin{align*}
\E[\Delta X_{l_i-1}]
&= - \frac{l_i\alpha_{i+1}+{\alpha_{i+1}\choose 2}}{{N\choose 2}},
\\
\E[\Delta X_{l_i}]
&= 
\frac{l_i\alpha_{i+1}}{{N\choose 2}},
\\
 \E[\Delta X_{l_i+1}]
&= \frac{{\alpha_{i+1}\choose 2}}{{N\choose 2}},\\
\E[\Delta X_{l_i+j}]
&=0,  j=2,\ldots, \alpha_{i+1}-2.
\end{align*}

\item 
\underline{
Case $\alpha_{i+1}\ge 3$ and $\alpha_{i}=2$: 
}
\begin{align*}
\E[\Delta X_{l_i-1}]
&= - \frac{l_i\alpha_{i+1}+{\alpha_{i+1}\choose 2}}{{N\choose 2}}
+\frac{1}{{N\choose 2}},
\\
\E[\Delta X_{l_i}]
&= 
\frac{l_i\alpha_{i+1}}{{N\choose 2}},
\\
 \E[\Delta X_{l_i+1}]
&= \frac{{\alpha_{i+1}\choose 2}}{{N\choose 2}},\\
\E[\Delta X_{l_i+j}]
&=0,  j=2,\ldots, \alpha_{i+1}-2.
\end{align*}

\item 
\underline{
Case $\alpha_{i+1}\ge 3$ and $\alpha_{i}=1$: 
}
\begin{align*}
\E[\Delta X_{l_i-1}]
&= - \frac{l_i\alpha_{i+1}+{\alpha_{i+1}\choose 2}}{{N\choose 2}}
+\frac{l_{i-1}}{{N\choose 2}},
\\
\E[\Delta X_{l_i}]
&= 
\frac{l_i\alpha_{i+1}}{{N\choose 2}},
\\
 \E[\Delta X_{l_i+1}]
&= \frac{{\alpha_{i+1}\choose 2}}{{N\choose 2}},\\
\E[\Delta X_{l_i+j}]
&=0,  j=2,\ldots, \alpha_{i+1}-2.
\end{align*}

\item 
\underline{
Case $\alpha_{i+1}=2$ and $\alpha_{i}\ge 3$: 
}
\begin{align*}
\E[\Delta X_{l_i-1} ] &= 
-\frac{2l_i+1}{{N \choose 2}}
=-\frac{2l_{i}+1}{{N \choose 2}},\\
\E[\Delta X_{l_i} ] &=
\frac{2l_i}{{N \choose 2}}.
\end{align*}

\item 
\underline{
Case $\alpha_{i+1}=2$ and $\alpha_{i}=2$: 
}
\begin{align*}
\E[\Delta X_{l_i-1} ] &= 
-\frac{2l_i+1}{{N \choose 2}}
+\frac{1}{{N \choose 2}}
=-\frac{2l_{i}}{{N \choose 2}},\\
\E[\Delta X_{l_i} ] &=
\frac{2l_i}{{N \choose 2}}.
\end{align*}

\item 
\underline{
Case $\alpha_{i+1}=2$ and $\alpha_{i}=1$: 
}
\begin{align*}
\E[\Delta X_{l_i-1} ] &= 
-\frac{2l_i+1}{{N \choose 2}}
+\frac{l_{i-1}}{{N \choose 2}}
=-\frac{l_{i}+2}{{N \choose 2}},\\
\E[\Delta X_{l_i} ] &=
\frac{2l_i}{{N \choose 2}}.
\end{align*}

\item 
\underline{
Case $\alpha_{i+1}=1$ and  $\alpha_{i}\ge 3$: 
}
\[
\E[\Delta X_{l_i-1} ] = 
-\frac{l_i}{{N \choose 2}}
.
\]

\item 
\underline{
Case $\alpha_{i+1}=1$ and  $\alpha_{i}=2$: 
}
\[
\E[\Delta X_{l_i-1} ] = 
-\frac{l_i}{{N \choose 2}}
+\frac{1}{{N \choose 2}}
=-\frac{l_i-1}{{N \choose 2}}.
\]

\item  
\underline{
Case $\alpha_{i+1}=\alpha_i=1$: 
}
\[
\E[\Delta X_{l_i-1} ] = 
-\frac{l_i}{{N \choose 2}}
+\frac{l_{i-1}}{{N \choose 2}}
=-\frac{1}{{N \choose 2}}.
\] 
\end{enumerate}

We summarise the computations of the expected changes $\E[\Delta X_i]$ in Table~\ref{table:expectations} below. 
Each row corresponds to a particular combination of $(\alpha_{i+1},\alpha_i)$, 
and lists the non-zero expectations at positions $l_i-1$, $l_i$, and $l_i+1$. 
All other entries are zero.

\begin{table}[h!]
\centering
\renewcommand{\arraystretch}{1.4}
\begin{tabular}{|c|c|c|c|c|}
\hline
Case & $(\alpha_{i+1},\alpha_i)$ & $\E[\Delta X_{l_i-1}]$ & $\E[\Delta X_{l_i}]$ & $\E[\Delta X_{l_i+1}]$ \\
\hline
(1) & $(\ge 3,\ge 3)$ 
& $-\tfrac{l_i\alpha_{i+1}+{\alpha_{i+1}\choose 2}}{{N\choose 2}}$ 
& $\tfrac{l_i\alpha_{i+1}}{{N\choose 2}}$ 
& $\tfrac{{\alpha_{i+1}\choose 2}}{{N\choose 2}}$ \\
\hline
(2) & $(\ge 3,2)$ 
& $-\tfrac{l_i\alpha_{i+1}+{\alpha_{i+1}\choose 2}}{{N\choose 2}}
+\tfrac{1}{{N\choose 2}}$ 
& $\tfrac{l_i\alpha_{i+1}}{{N\choose 2}}$ 
& $\tfrac{{\alpha_{i+1}\choose 2}}{{N\choose 2}}$ \\
\hline
(3) & $(\ge 3,1)$ 
& $- \frac{l_i\alpha_{i+1}+{\alpha_{i+1}\choose 2}}{{N\choose 2}}
+\frac{l_{i-1}}{{N\choose 2}}$ 
& $\tfrac{l_i\alpha_{i+1}}{{N\choose 2}}$ 
& $\tfrac{{\alpha_{i+1}\choose 2}}{{N\choose 2}}$ \\
\hline
(4) & $(2,\ge 3)$ 
& $-\tfrac{2l_i+1}{{N\choose 2}}$ 
& $\tfrac{2l_i}{{N\choose 2}}$ & -- \\
\hline
(5) & $(2,2)$ 
& $-\tfrac{2l_i}{{N\choose 2}}$ 
& $\tfrac{2l_i}{{N\choose 2}}$ & -- \\
\hline
(6) & $(2,1)$ 
& $-\tfrac{l_i+2}{{N\choose 2}}$ 
& $\tfrac{2l_i}{{N\choose 2}}$ & -- \\
\hline
(7) & $(1,\ge 3)$ 
& $-\tfrac{l_i}{{N\choose 2}}$ & -- & -- \\
\hline
(8) & $(1,2)$ 
& $-\tfrac{l_i-1}{{N\choose 2}}$ & -- & -- \\
\hline
(9) & $(1,1)$ 
& $-\tfrac{1}{{N\choose 2}}$ & -- & -- \\
\hline
\end{tabular}
\caption{Summary of expectations $\E[\Delta X_i]$ for all cases of $(\alpha_{i+1},\alpha_i)$. 
Entries not shown explicitly are zero, i.e., for cases (7), (8), and (9), 
$\E[\Delta X_{l_i+j}] = 0$ for $j \ge 2$.}
\label{table:expectations}
\end{table}

\subsection{Computation of values of test functions for each configuration }

With an arbitrary test function $h$ and and arbitrary constant $v$, for any given configuration 
$\alpha=(\alpha_1,\alpha_2,\ldots,\alpha_M)$ 
we will consider 
\[
L_\alpha h:=
\sum_{k=1}^{N-2} h(k)\E[\Delta X_k]
+v-1- \sum_{i=1}^M\frac{{\alpha_{i+1}\choose 2}}{{N\choose 2}}. 
\] 

Recall again that
\[
1+\sum_{i=1}^M\frac{{\alpha_{i+1}\choose 2}}{{N\choose 2}}
\]
is the expected number of 
counters  updated in a given configuration. 

With the convention $\Delta X_{-1}=\Delta X_0=0$, we can decompose $L_\alpha h$ 
according to contributions of different levels as follows:
\[
L_\alpha h
 =v-1
+
\sum_{i=0}^{M-1} 
\left(
\sum_{j=0}^{\alpha_{i+1}-1}
h(l_{i}+j-1)
\E[\Delta X_{l_{i}+j-1}]
-\frac{{\alpha_{i+1}\choose 2}}{{N\choose 2}}
\right).
\]
Denote the contribution 
of level $i+1$ by 
\begin{multline}\label{expect.level}
E(l_i,\alpha_{i},\alpha_{i+1}):=
\sum_{j=0}^{\alpha_{i+1}-1}
h(l_{i}+j-1)
\E[\Delta X_{l_{i}+j-1}]
-\frac{{\alpha_{i+1}\choose 2}}{{N\choose 2}}\\
=
\sum_{j=0}^{\alpha_{i+1}-1}
h(l_{i}+j-1)
\E[\Delta X_{l_{i}+j-1}]
-\frac{\alpha_{i+1}(\alpha_{i+1}-1)}{N(N-1)}.
\end{multline}
We have included the parameter
$\alpha_{i}$ as 
the expectations $\E[\Delta X_{l_i+j-1}]$ depend on it directly for each level 
$i+1$. 

We will next compute the function in~\eqref{expect.level} 
for various combinations of $\alpha_i$ 
and $\alpha_{i+1}$. 

We will treat separately the first level. 
We define $\alpha_0=0,l_0=0$. 
If $\alpha_1=2$ then
\begin{equation} \label{eq:0_0_2}
E(0,0,2) = -\frac{1}{{N\choose 2}}
=-\frac{2}{N(N-1)}. 
\end{equation}
If  $\alpha_1 \ge 3$ then, by \eqref{eq:level_1},
\begin{align}
E(0,0,\alpha_1) &= h(1)\,\E[\Delta X_1]
- \frac{\alpha_1 (\alpha_1 - 1)}{N(N-1)} 
= \frac{\alpha_1 (\alpha_1 - 1)}{N(N-1)} \big(h(1) - 1\big). \label{eq:0_0_3}
\end{align}

In order to make the following computations more transparent, we rewrite 
 Table~\ref{table:expectations} 
 in the following equivalent form, Table~\ref{table:expectations.2}. 
 The advantage of this new form is that  
 the differences between cases are highlighted.

\begin{table}[h!]
\centering
\renewcommand{\arraystretch}{1.4}
\begin{tabular}{|c|c|c|c|c|}
\hline
Case & $(\alpha_{i+1},\alpha_i)$ & $\E[\Delta X_{l_i-1}]$ & $\E[\Delta X_{l_i}]$ & $\E[\Delta X_{l_i+1}]$ \\
\hline
(1) & $(\ge 3,\ge 3)$ 
& $-\tfrac{2l_i\alpha_{i+1}+
\alpha_{i+1}(\alpha_{i+1}-1)}
{N(N-1)}$ 
& $\tfrac{2l_i\alpha_{i+1}}{N(N-1)}$ 
& $\tfrac{\alpha_{i+1}(\alpha_{i+1}-1)}{N(N-1)}$ \\
\hline
(2) & $(\ge 3,2)$ 
& $-\tfrac{2l_i\alpha_{i+1}+
\alpha_{i+1}(\alpha_{i+1}-1)-2}
{N(N-1)}$ 
& $\tfrac{2l_i\alpha_{i+1}}{N(N-1)}$ 
& $\tfrac{\alpha_{i+1}(\alpha_{i+1}-1)}{N(N-1)}$ \\
\hline
(3) & $(\ge 3,1)$ 
& $-\tfrac{2l_i\alpha_{i+1}+
\alpha_{i+1}(\alpha_{i+1}-1)-2l_{i-1}}
{N(N-1)}$ 
& $\tfrac{2l_i\alpha_{i+1}}{N(N-1)}$ 
& $\tfrac{\alpha_{i+1}(\alpha_{i+1}-1)}{N(N-1)}$ \\
\hline
(4) & $(2,\ge 3)$ 
& $-\tfrac{2l_i\alpha_{i+1}+
\alpha_{i+1}(\alpha_{i+1}-1)}
{N(N-1)}$ 
& $\tfrac{2l_i\alpha_{i+1}}{N(N-1)}$ & 
$\tfrac{\alpha_{i+1}(\alpha_{i+1}-1)}{N(N-1)}
-\boxed{\tfrac{2}{N(N-1)}}$
\\
\hline
(5) & $(2,2)$ 
& $-\tfrac{l_i\alpha_{i+1}+
\alpha_{i+1}(\alpha_{i+1}-1)-2}
{N(N-1)}$ 
& $\tfrac{2l_i\alpha_{i+1}}{N(N-1)}$ & $\tfrac{\alpha_{i+1}(\alpha_{i+1}-1)}{N(N-1)}
-\boxed{\tfrac{2}{N(N-1)}}$ \\
\hline
(6) & $(2,1)$ 
& $-\tfrac{2l_i\alpha_{i+1}+
\alpha_{i+1}(\alpha_{i+1}-1)-2l_{i-1}}
{N(N-1)}$ 
& $\tfrac{2l_i\alpha_{i+1}}{N(N-1)}$ & $\tfrac{\alpha_{i+1}(\alpha_{i+1}-1)}{N(N-1)}
-\boxed{\tfrac{2}{N(N-1)}}$ \\
\hline
(7) & $(1,\ge 3)$ 
& $-\tfrac{2l_i\alpha_{i+1}+
\alpha_{i+1}(\alpha_{i+1}-1)}
{N(N-1)}$ & 
$\tfrac{2l_i\alpha_{i+1}}{N(N-1)}
-\boxed{\tfrac{2l_i}{N(N-1)}}
$
& $\tfrac{\alpha_{i+1}(\alpha_{i+1}-1)}{N(N-1)}$ \\
\hline
(8) & $(1,2)$ 
& $-\tfrac{2l_i\alpha_{i+1}+
\alpha_{i+1}(\alpha_{i+1}-1)-2}
{N(N-1)}$ & $\tfrac{2l_i\alpha_{i+1}}{N(N-1)}
-\boxed{\tfrac{2l_i}{N(N-1)}}$ & $\tfrac{\alpha_{i+1}(\alpha_{i+1}-1)}{N(N-1)}$ \\
\hline
(9) & $(1,1)$ 
& $-\tfrac{2l_i\alpha_{i+1}+
\alpha_{i+1}(\alpha_{i+1}-1)-2l_{i-1}}
{N(N-1)}$ & $\tfrac{2l_i\alpha_{i+1}}{N(N-1)}
-\boxed{\tfrac{2l_i}{N(N-1)}}$ & $\tfrac{\alpha_{i+1}(\alpha_{i+1}-1)}{N(N-1)}$ \\
\hline
\end{tabular}
\caption{Equivalent form of Table \ref{table:expectations}}
\label{table:expectations.2}
\end{table}

We will compute $E(l_i,\alpha_i,\alpha_{i+1})$
separately for each of the $9$ cases. 
We can see from Table~\ref{table:expectations.2} 
that the main cases are (1)--(3) and 
other cases will have extra contributions from boxed summands.

\begin{enumerate}[(1)]
    \item 
    \underline{Case $\alpha_{i+1}\ge 3$ and $\alpha_{i}\ge 3$.}
Note that 
\begin{align*}
& E(l_i,\alpha_i,\alpha_{i+1}) 
=: E^{(1)}(l_i, \alpha_{i+1})
\\&= 
  \sum_{m=-1}^{\alpha_{i+1}-2}
  h(l_i+m)\E[\Delta X_{l_i+m}] -\frac{\alpha_{i+1}(\alpha_{i+1}-1)}{N(N-1)}
  \\ 
  &= h(l_i-1)\E[\Delta X_{l_i-1}]
  +
  h(l_i)\E[\Delta X_{l_i}]
  +
  h(l_i+1)\E[\Delta X_{l_i+1}]
  -\frac{\alpha_{i+1}(\alpha_{i+1}-1)}{N(N-1)}\\
  & =: E^*(l_i,\alpha_{i+1}),  
\end{align*}
where 
\[
E^*(l,\alpha) 
=\frac{-\alpha(2l+\alpha-1)h(l-1)
+2l\alpha h(l)
+\alpha(\alpha-1)h(l+1)
-\alpha(\alpha-1)
}{N(N-1)}.
\]
Denote $\Delta h(l)=h(l+1)-h(l)$. 
Then,
\[
h(l+1) = h(l)+\Delta h(l) 
\text{ and } 
h(l-1) = h(l)-\Delta h(l-1).
\]
We can then rewrite
\begin{equation}\label{e.star}
E^{*}(l,\alpha)
=\frac{\alpha(2l+\alpha-1)\Delta h(l-1)
+
\alpha(\alpha-1)
\Delta h(l)-\alpha(\alpha-1)
}{N(N-1)}.     
\end{equation}




    \item 
    \underline{Case $\alpha_{i+1}\ge 3$ and $\alpha_{i}=2$.}
In comparison with case (1), we have
\begin{align*}
E(l_i,2,\alpha_{i+1}) =: 
E^{(2)}(l_i, \alpha_{i+1})
=
E^*(l_i, \alpha_{i+1})
+\frac{2h(l_{i}-1)}{N(N-1)}.
\end{align*}

    \item 
    \underline{Case $\alpha_{i+1}\ge 3$ and $\alpha_{i}=1$.}
In comparison with case (1), we have
\begin{align*}
E(l_i,1,\alpha_{i+1}) &=: 
E^{(3)}(l_i, \alpha_{i+1})\\ 
&=
E^*(l_i, \alpha_{i+1})
+\frac{2(l_{i}-1)h(l_{i}-1)}{N(N-1)},
\end{align*}
since $l_i-1=l_{i-1}$ when $\alpha_i=1$.

    \item 
    \underline{Case $\alpha_{i+1}=2$ and $\alpha_{i}\ge 3$.}
Now, Table~\ref{table:expectations.2} 
shows that this is almost 
case (1) with one extra contribution, that is,  
\[
E(l_i,\alpha_i,2) 
=
E^{(1)}(l_i, \alpha_{i+1})
-\frac{2}{N(N-1)} h(l_{i}+1).
\]
    \item 
    \underline{Case $\alpha_{i+1}=2$ and $\alpha_{i}=2$.}

  Table~\ref{table:expectations.2} 
shows that this is  
case (2) with one extra contribution, that is,
\[
E(l_i,\alpha_i,2) 
=
E^{(2)}(l_i, \alpha_{i+1})
-\frac{2}{N(N-1)} h(l_{i}+1).
\]  

    \item 
    \underline{Case $\alpha_{i+1}=2$ and $\alpha_{i}=1$.}
Table~\ref{table:expectations.2} 
shows that this is  
case (3) with one extra contribution, that is, 
\[
E(l_i,\alpha_i,2) 
=
E^{(3)}(l_i, \alpha_{i+1})
-\frac{2}{N(N-1)} h(l_{i}+1).
\]  

    \item 
    \underline{Case $\alpha_{i+1}=1$ and $\alpha_{i}\ge 3$.}
Table~\ref{table:expectations.2} 
shows that this is  
case (1) with one extra contribution, that is,
\[
E(l_i,\alpha_i,1) 
=
E^{(1)}(l_i, \alpha_{i+1})
-\frac{2l_i h(l_i)}{N(N-1)}.
\]

    \item 
    \underline{Case $\alpha_{i+1}=1$ and $\alpha_{i}=2$.}
Table~\ref{table:expectations.2} 
shows that this is  
case (2) with one extra contribution, that is,
\[
E(l_i,\alpha_i,1) 
=
E^{(2)}(l_i, \alpha_{i+1})
-\frac{2l_i h(l_i)}{N(N-1)}.
\]  

    \item 
    \underline{Case $\alpha_{i+1}=\alpha_i=1$.}
Table~\ref{table:expectations.2} 
shows that this is  
case (3) with one extra contribution, that is,
\[
E(l_i,\alpha_i,1) 
=
E^{(3)}(l_i, \alpha_{i+1})
-\frac{2l_i h(l_i)}{N(N-1)}.
\]  

\end{enumerate}
We can summarise all cases in the table below. 
\begin{table}[h!]
\centering
\renewcommand{\arraystretch}{1.4}
\begin{tabular}{|c|c|c|}
\hline
Case & $(\alpha_i,\alpha_{i+1})$ & $E(l_i,\alpha_i,\alpha_{i+1})$ \\
\hline
(1) & $\alpha_i \ge 3,\;\alpha_{i+1} \ge 3$ &
$\displaystyle
E^*(l_i,\alpha_{i+1})
$ \\
\hline
(2) & $\alpha_i=2,\;\alpha_{i+1}\ge 3$ &
$\displaystyle 
E^*(l_i,\alpha_{i+1})+\frac{2h(l_i-1)}{N(N-1)}$ \\
\hline
(3) & $\alpha_i=1,\;\alpha_{i+1}\ge 3$ &
$\displaystyle 
E^*(l_i,\alpha_{i+1})
+\frac{2(l_i-1)h(l_i-1)}{N(N-1)}$ \\
\hline
(4) & $\alpha_i \ge 3,\;\alpha_{i+1}=2$ &
$\displaystyle 
E^{(1)}(l_i,\alpha_{i+1})-\frac{2h(l_i+1)}{N(N-1)}$ \\
\hline
(5) & $\alpha_i=2,\;\alpha_{i+1}=2$ &
$\displaystyle 
E^{(2)}(l_i,\alpha_{i+1})-\frac{2h(l_i+1)}{N(N-1)}$ \\
\hline
(6) & $\alpha_i=1,\;\alpha_{i+1}=2$ &
$\displaystyle 
E^{(3)}(l_i,\alpha_{i+1})-\frac{2h(l_i+1)}{N(N-1)}$ \\
\hline
(7) & $\alpha_i \ge 3,\;\alpha_{i+1}=1$ &
$\displaystyle 
E^{(1)}(l_i,\alpha_{i+1})-\frac{2l_i h(l_i)}{N(N-1)}$ \\
\hline
(8) & $\alpha_i=2,\;\alpha_{i+1}=1$ &
$\displaystyle 
E^{(2)}(l_i,\alpha_{i+1})-\frac{2l_i h(l_i)}{N(N-1)}$ \\
\hline
(9) & $\alpha_i=1,\;\alpha_{i+1}=1$ &
$\displaystyle 
E^{(3)}(l_i,\alpha_{i+1})-\frac{2l_i h(l_i)}{N(N-1)}$ \\
\hline
\end{tabular}
\label{table.contributions}
\end{table}

We can combine all the cases above into the following formula:
\begin{multline}
\label{e.l.alpha.beta}
E(l_i,\alpha_i,\alpha_{i+1})
=
E^*(l_i,\alpha_{i+1})
+\frac{2 h(l_i-1)}{N(N-1)}I(\alpha_i=2)
+\frac{2 (l_i-1)h(l_i-1)}{N(N-1)}I(\alpha_i=1)\\
-
\frac{2 h(l_i+1)}{N(N-1)}I(\alpha_{i+1}=2)
-\frac{2l_ih(l_i)}{N(N-1)}I(\alpha_{i+1}=1). 
\end{multline}
Recall that 
\[
L_\alpha h
=
v-1+
E(0,0,\alpha_1)+
\sum_{i=1}^{M-1}
E(l_i,\alpha_i,\alpha_{i+1}).
\]
Utilising~\eqref{e.l.alpha.beta}, we further rewrite
\begin{multline*}
\sum_{i=1}^{M-1}
E(l_i,\alpha_i,\alpha_{i+1})
=
\sum_{i=1}^{M-1}
E^*(l_i,\alpha_{i+1})\\
+
\frac{2}{N(N-1)}
\Biggl(
\sum_{i=1}^{M-1}
h(l_i-1)I(\alpha_i=2)
-
\sum_{i=1}^{M-1}
h(l_i+1)I(\alpha_{i+1}=2)\\
+
\sum_{i=1}^{M-1}
(l_i-1)h(l_i-1)I(\alpha_i=1)
-
\sum_{i=1}^{M-1}
l_ih(l_i)I(\alpha_{i+1}=1)
\Biggr).
\end{multline*}
We can observe now that
\begin{multline*}
    \sum_{i=1}^{M-1}
h(l_i-1)I(\alpha_i=2)
-
\sum_{i=1}^{M-1}
h(l_i+1)I(\alpha_{i+1}=2)\\
=
\sum_{i=1}^{M-1}
h(l_i-1)I(\alpha_i=2)
-
\sum_{i=1}^{M-1}
h(l_{i+1}-1)I(\alpha_{i+1}=2)\\
=h(l_1-1)I(\alpha_1=2)
-h(l_M-1)I(\alpha_M=2)
\end{multline*}
and 
\begin{multline*}
    \sum_{i=1}^{M-1}
(l_i-1)h(l_i-1)I(\alpha_i=1)
-
\sum_{i=1}^{M-1}
l_ih(l_i)I(\alpha_{i+1}=1)
\\
=
\sum_{i=1}^{M-1}
(l_i-1)h(l_i-1)I(\alpha_i=1)
-
\sum_{i=1}^{M-1}
(l_{i+1}-1)h(l_{i+1}-1)I(\alpha_{i+1}=1)\\
=
-(l_M-1)h(l_M-1)I(\alpha_M=1),
\end{multline*}
as $I(\alpha_1=1)=0$. Therefore, 
\begin{multline*}
    L_\alpha h 
    =v-1+E(0,0,\alpha_1)+
    \sum_{i=1}^{M-1}
E^*(l_i,\alpha_{i+1})
+\frac{2}{N(N-1)}
\Bigl(
h(l_1-1)I(\alpha_1=2)
\\
-h(l_M-1)I(\alpha_M=2)
-(l_M-1)h(l_M-1)I(\alpha_M=1)
\Bigr).
\end{multline*}
Next, combining \eqref{eq:0_0_2} and \eqref{eq:0_0_3}, we write
\begin{align*}
E(0,0,\alpha_1)
+\frac{2}{N(N-1)}
h(\alpha_1-1)
I(\alpha_1=2)
&= \frac{\alpha_1(\alpha_1-1)}{N(N-1)}(h(1)-1).\\
\end{align*}
Now, note that 
\begin{align*}
E^*(0,\alpha_1)
&=
\frac{\alpha(\alpha-1)}{N(N-1)}
\left(
\Delta h(-1)
+\Delta h(0)-1
\right)\\
&
=\frac{\alpha(\alpha-1)}{N(N-1)}
\left(
h(1)-h(-1)-1
\right),
\end{align*}
so, imposing the assumption $h(-1)=0$, we obtain 
\[
E(0,0,\alpha_1)
+\frac{2}{N(N-1)}
h(\alpha_1-1)
I(\alpha_1=2)
=E^*(0,\alpha_1).
\]
Hence, 
recalling that $l_0=0,$  
we have a 
slightly simpler expression 
\begin{multline}\label{sum.simplified}
L_\alpha h 
    =v-1
    + 
    \sum_{i=0}^{M-1}
E^*(l_i,\alpha_{i+1})\\
-\frac{2}{N(N-1)}
\Bigl(
h(l_M-1)I(\alpha_M=2)
+(l_M-1)h(l_M-1)I(\alpha_M=1)
\Bigr).
\end{multline}

At the moment we have not made any assumptions about $h$ apart from it 
being defined for all integers on $[-1,N]$ and 
\[
h(-1)=0.
\]

\section{Proof of Theorem~\ref{thm:positive.recurrent}}

We provide two proofs of positive recurrence, using a quadratic and an exponential Lyapunov function. As is standard convention, we will use $\E_x$ to denote expectations conditioned on $X(0)=x$.

\subsection{Proof with quadratic Lyapunov function}

Consider  the following Lyapunov function 
\[
L(x)=x_1^2+\cdots+x_{N-2}^2
\] 
for the vector $x=(x_1,\ldots,x_{N-2})$.  Then, there exists a constant
$R$ such that 
$$
\E_x L(X(1))-L(x) \le -1
$$
for all $x$ with $\max  x_i>R$. This is sufficient for the positive recurrence, thanks to the well-known Foster-Lyapunov criteria.

To prove the bound above, we denote $\Delta X_i =X_i-x_i$. Then,
\begin{align*}
  \E_x L(X_1)-L(x) = 2\sum_{i=1}^{N-2}  x_i \E_x  \Delta X_i +\E_x\sum_{i=1}^{N-2} 
  \Delta X_i^2. 
\end{align*}
Next note that 
$$
\sum_{i=1}^N \E_x
  \Delta X_i^2 \le 4,
$$
as at most 2 points can  be selected 
and moved up, thus changing the distances at most in $4$ cases.   
When $x_i\ge 1$ we can observe that 
\begin{equation}\label{eq0}
\E_x \Delta X_i\le \frac{-1}{{N\choose 2}},
\end{equation}
which can be seen from the second column in Table~\ref{table:expectations}.  

Once we have~\eqref{eq0} 
we obtain  immediately the statement as 
$$
\E_x L(X_1)-L(x) \le 2 \max x_i \frac{-1}{{N\choose 2}} +4\le -1
$$
once 
\[
\max x_i \ge \frac{5}{4} N(N+1).
\]


\subsection{Proof with exponential Lyapunov function}
Let, for some $r\in(0,1)$,  
\[
L(x)=e^{rx_1}+\cdots+e^{rx_{N-2}}
\] 
for the vector $x=(x_1,\ldots,x_{N-2})$.
Then, using inequality $e^x\le 1+x+x^2$ valid for $x\le 1$, we obtain 
\begin{align*}
  \E_x L(X_1)-L(x)&= 
  \sum_{i=1}^{N-2} e^{rx_i}\E_x e^{r\Delta X_i}-L(x)\\
  &\le 
  \sum_{i=1}^{N-2} 
e^{r x_i}
\left(
1+r\E_x \Delta X_i+r^2\E_x (\Delta X_i)^2\right)-L(x)\\
&=
\sum_{i=1}^{N-2} 
(e^{r x_i}-1)
\left(
r\E_x \Delta X_i+r^2\E_x (\Delta X_i)^2\right)\\
&
\phantom{XX}+
\sum_{i=1}^{N-2} 
\left(
r\E_x \Delta X_i+r^2\E_x (\Delta X_i)^2\right). 
\end{align*}
Note 
from computations of the expectation 
that when $x_i>0$, see Table~\ref{table:expectations},
\[
r \E_x\Delta X_i
+r^2\E_x(\Delta X_i)^2
\le 2\frac{-r+r^2}{N(N-1)}
\]
and is negative 
when 
\[
r=\frac{1}{2}.
\]
Therefore, 
\[
\E_x L(X_1)-L(x)
\le -\frac{1}{2N(N-1)}
\sum_{i=1}^{N-2}
(e^{\frac12 x_i}-1)
+2.
\]
The RHS is bounded from above by $-1$ if, for example, 
\[
-\frac{1}{N(N-1)} (e^{\frac12\max x_i}-1)
+3<0.
\]
This holds when 
when 
\[
\max x_i
>
2
\ln\left(16N^2\right).
\]

\section{Monotonicity property of configurations}\label{sec:reduction}

We start with 
a general quadratic function 
\begin{equation}\label{defn.f.quadratic}
h(x)=
A(x^2-1)+B(x+1).
\end{equation}
This particular form ensures 
that condition $h(-1)=0$ holds.

There are $2^{N-2}$ possible configurations, which is a 
large number to analyse directly. In this section 
we will discuss how to reduce the problem to the analysis of several  cases. 
First, a definition: we say that 
configuration $\hat \alpha$  is worse than $\alpha$ if 
$L_\alpha (h)\ge L_{\hat\alpha} (h)$.  
So we will aim to find several configurations, which 
will be worse than all other configurations and then our task will be to analyse these configurations.

We will use two devices to obtain worse configurations: merging of levels, when  all counters from one level are moved to another level; 
and rebalancing, when counters in two levels will be moved to ensure that they have equal number of counters or differ by at most  $1$ counter. 

Recall that $M$ denotes the number of levels. We will consider levels $k$ and $k-1$ that are neighbours. When $k=M$ we exclude the situation $\alpha_M\in\{1,2\}$ from the considerations, we keep the last level as it is. 

We obtain a new configuration $\hat \alpha$ from $\alpha$ 
by moving $\beta$ counters 
from level $k-1$ to $k$. All other counters stay at the same place. If $\beta $ is negative, this means 
that we are moving $-\beta$ counters 
from level $k$ to level $k-1$. 
If $\beta=\alpha_{k-1}$ we have moved all counters from level $k-1$ to $k$, which corresponds to merging of levels. For $\beta\in(-\alpha_{k},\alpha_{k-1})$ we have rebalancing of levels. 
The change in $L_\alpha f$ is given by, using~\eqref{sum.simplified},
\begin{multline*}
    L_\alpha h - L_{\hat \alpha} h 
    =
    D_k(\beta):= E^*(l_{k-1},\alpha_k) +E^*(l_{k-2},\alpha_{k-1}) \\-E^*(l_{k-1}-\beta,\alpha_k+\beta) -E^*(l_{k-2},\alpha_{k-1}-\beta).
\end{multline*}
To analyse this difference we prove an auxiliary lemma first.  Let operator $\mathcal E_a$ be given by 
\begin{align*}
\mathcal E_{c} h &= -\alpha(2l+c-1)hl-1)
+2lch(l)
+c(c-1)h(l+1)\\
&=
c(2l+c-1)\Delta h(l-1)
+
c(c-1)
\Delta h(l),
\end{align*}
so that 
\[
E^*(l,c) 
=\frac{\mathcal E_{c} h(l)
-c(c-1)
}{N(N-1)}. 
\]
\begin{lemma}
Let $P_n(x) = {x\choose n}$ be 
a binomial coefficient. 
For arbitrary $l$ 
let 
\[
Q_n(a,b):=
\mathcal E_{b}P_n(l+a)
+
\mathcal E_{a}P_n(l)
-
\mathcal E_{b+\beta}P_n(l+a-\beta)
-\mathcal E_{a-\beta}P_n(l).
\]
Then,
\begin{align*}
Q_0(a,b) &=0\\
Q_1(a,b) &= 2\beta(a-b-\beta) \\ 
Q_2(a,b) &= -2\beta(a-b-\beta)(a+b-1).
\end{align*}
\end{lemma}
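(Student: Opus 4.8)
The plan is to verify the three formulas for $Q_0, Q_1, Q_2$ by direct computation, exploiting the linearity of $\mathcal{E}_c$ in the test function and the low degree of the polynomials $P_n$. The key simplification is that $\mathcal{E}_c h$, as given by the second expression in its definition, depends on $h$ only through the first differences $\Delta h(l-1)$ and $\Delta h(l)$. So for each $P_n$ I only need $\Delta P_n$ evaluated at the relevant points, and these are themselves binomial coefficients: $\Delta P_n(x) = P_n(x+1) - P_n(x) = \binom{x+1}{n} - \binom{x}{n} = \binom{x}{n-1} = P_{n-1}(x)$ by Pascal's rule.

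\textbf{Step 1 (the case $n=0$).} Here $P_0 \equiv 1$, so $\Delta P_0 \equiv 0$, hence $\mathcal{E}_c P_0(l) = 0$ for every $c$ and every $l$. Therefore all four terms in $Q_0(a,b)$ vanish and $Q_0(a,b) = 0$.

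\textbf{Step 2 (the case $n=1$).} Here $P_1(x) = x$, so $\Delta P_1 \equiv 1$. From the formula $\mathcal{E}_c h(l) = c(2l+c-1)\Delta h(l-1) + c(c-1)\Delta h(l)$ we get
\[
\mathcal{E}_c P_1(l) = c(2l+c-1) + c(c-1) = c(2l + 2c - 2) = 2c(l+c-1).
\]
Substituting into the definition of $Q_1(a,b)$,
\[
Q_1(a,b) = 2b(l+a+b-1) + 2a(l+a-1) - 2(b+\beta)(l+a-\beta+b+\beta-1) - 2(a-\beta)(l+a-\beta-1),
\]
and collecting terms — the terms linear in $l$ cancel immediately since $b+a = (b+\beta)+(a-\beta)$ — one is left with a quadratic in $a,b,\beta$ which simplifies to $2\beta(a-b-\beta)$. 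This is a short expansion; I would present the two or three intermediate lines.

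\textbf{Step 3 (the case $n=2$).} Here $P_2(x) = \binom{x}{2} = \tfrac{x(x-1)}{2}$, so by Step 1's observation $\Delta P_2(x) = P_1(x) = x$. Thus
\[
\mathcal{E}_c P_2(l) = c(2l+c-1)(l-1) + c(c-1)l,
\]
which expands to a polynomial in $l$ (degree 2) with coefficients polynomial in $c$. Substituting the four arguments into $Q_2(a,b)$, the degree-2-in-$l$ and degree-1-in-$l$ parts must cancel by the same conservation $a+b = (a-\beta)+(b+\beta)$ and a short check of the cross terms; the constant-in-$l$ remainder collapses to $-2\beta(a-b-\beta)(a+b-1)$. \emph{The main obstacle} — really the only one — is the bookkeeping here: this is the most involved of the three expansions, and one must be careful that the $l$-dependent terms genuinely cancel (they must, since the statement asserts an $l$-free answer) rather than making an algebra slip. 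A clean way to organize it is to write $\mathcal{E}_c P_2(l)$ as $A(c) l^2 + B(c) l + C(c)$ with $A(c) = 2c$, and verify that $c \mapsto A(c)$ and $c \mapsto B(c)$ are such that $A(b)+A(a) = A(b+\beta)+A(a-\beta)$ and similarly for $B$, reducing the problem to the constant terms $C(b+a) $ appropriately paired — though since $A,B$ are only affine/quadratic in $c$ this is transparent. I would carry out this reduction explicitly and then finish with the one-line factorization of the remaining constant term.

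Throughout, I would emphasize that no cleverness is needed beyond Pascal's rule $\Delta P_n = P_{n-1}$ and the telescoping observation that the "mass-conservation" $a+b = (a-\beta)+(b+\beta)$ forces the $l$-dependent contributions to drop out — which is exactly the structural reason the lemma holds and why it will be useful when applied to the quadratic test function $h$ in \eqref{defn.f.quadratic}, a linear combination of $P_0, P_1, P_2$ (after the shift by $1$), in the subsequent monotonicity analysis of $D_k(\beta)$.
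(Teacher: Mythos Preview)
Your proposal is correct and follows essentially the same route as the paper: both use Pascal's rule $\Delta P_n=P_{n-1}$ to reduce $\mathcal E_c P_n$ to explicit polynomials, obtain $\mathcal E_c P_1(l)=2c(l+c-1)$ and $\mathcal E_c P_2(l)=c(2l+c-1)(l-1)+c(c-1)l$, and then substitute and simplify directly. The paper simply expands the four terms of $Q_2$ and collects, whereas you add a bit of structural commentary (grouping by powers of $l$ and invoking $a+b=(a-\beta)+(b+\beta)$ to explain the cancellations), but the underlying computation is identical.
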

\begin{proof}
If $n=0$, 
then $P_0(x)=1$, and we see
immediately from $\Delta P_0(l)=0$ that 
\[
Q_0(a,b) = 0. 
\]
Next, note that 
\begin{align*}
\Delta P_n(x)
&=P_n(x+1)
-P_n(x)
={x+1 \choose n}
-{x \choose n}\\
&={x \choose n-1} = P_{n-1}(x),
\end{align*}
which gives us 
\begin{align*}
\mathcal E_\alpha  P_n(x)
=\alpha(2x+\alpha-1)P_{n-1}(x-1)
+\alpha(\alpha-1)P_{n-1}(x).
\end{align*}
Hence, for $n=1$ (so that $P_1(x)=x$ and $P_0(x)=1$), 
we have 
\[
\mathcal E_\alpha  P_1(x)
=2\alpha(x+\alpha-1).
\]
Therefore, 
\begin{align*}
Q_1(a,b)
&=
2b(l+a+b-1)
+2a(l+a-1)
-2(b+\beta)(l+a+b-1)\\
&\phantom{XX}-2(a-\beta)(l+a-\beta-1)
=2\beta(a-b-\beta).
\end{align*}
For $n=2$, 
\begin{align*}
\mathcal E_\alpha  P_2(x)
&=
\alpha(2x+\alpha-1)(x-1)
+\alpha(\alpha-1)x\\
&=
2\alpha(x+\alpha-1)(x-1)
+\alpha(\alpha-1).
\end{align*}
Therefore, 
\begin{align*}
Q_2(a,b)
&=2b(l+a+b-1)(l+a-1)+b(b-1)
+2a(l+a-1)(l-1)+a(a-1)
\\
&\phantom{XX}-
2(b+\beta)(l+a+b-1)(l+a-\beta-1)-(b+\beta)(b+\beta-1)\\
&\phantom{XX}-
2(a-\beta)(l+a-\beta-1)(l-1)-(a-\beta)(a-\beta-1)\\
&=-2\beta(a-b-\beta)(a+b-1)
. \qedhere
\end{align*}
\end{proof}
To apply  this lemma to~\eqref{defn.f.quadratic} observe first 
that 
\[
h(x) = 2AP_2(x)+(B+A)P_1(x)+2B
\]
to obtain 
\begin{align}
\nonumber
D_k(\beta)&=\frac{2AQ_2(\alpha_{k-1},\alpha_k)+(B+A)Q_1(\alpha_{k-1},\alpha_k)
}{N(N-1)}\\
\nonumber 
&\phantom{XX}-
\frac{\alpha_{k}(\alpha_{k}-1)+\alpha_{k-1}(\alpha_{k-1}-1)-(\alpha_{k}+\beta)(\alpha_{k}+\beta-1)}{N(N-1)}
\\
\nonumber
&\phantom{XX}-\frac{(\alpha_{k-1}-\beta)(\alpha_{k-1}-\beta-1)}{N(N-1)}\\\nonumber
&
=
\frac{-2\beta(\alpha_{k-1}-\alpha_{k}-\beta)\left(2A(\alpha_{k-1}+\alpha_{k}-1)-B-A\right)}{N(N-1)}
-\frac{2\beta(\alpha_{k-1}-\alpha_{k}-\beta)}{N(N-1)}
\\
&=
-2\beta(\alpha_{k-1}-\alpha_{k}-\beta)
\frac{2A(\alpha_k+\alpha_{k-1})-3A-B+1}{N(N-1)}.
\label{Dk.beta}
\end{align}

\subsection{Merging levels $k-1$ and $k$ for $k\ge 2$ }\label{subsec:merge.not.1} 

We assume that  
$k\in\{2,\ldots,M-1\} $ or $k=M,\alpha_M\ge 3$.

Specifying~\eqref{Dk.beta} to the case of merging, that 
is when $\beta=\alpha_{k-1}$, 
we obtain 
\begin{align*}
D_k(\alpha_{k-1})
&=
2\alpha_k\alpha_{k-1}
\frac{2A(\alpha_k+\alpha_{k-1})-3A-B+1}{N(N-1)}. 
\end{align*}
Now  choose  
\[
A=-\frac{1}{2(N-2)}.
\]
Observe that 
\[
\frac{1-3A-B}{2A}
=-\frac{3}{2}+\frac{1-B}{2A}
=-\frac{3}{2}-(N-2)(1-B). 
\]
Then $D_k(\alpha_{k-1})\ge 0$ provided 
\begin{equation}\label{eq:merge}
\alpha_{k-1}+\alpha_k\le N(B):=(N-2)(1-B)+\frac{3}{2}. 
\end{equation}
Therefore, when we merge levels 
$k-1$ and $k$ satisfying~\eqref{eq:merge}  we obtain a worse configuration.

\subsection{Redistribution of counters between levels $k-1$ and $k$ when $k\ge 2$ }\label{subsec:redistribute}
Consider again a general level $k$ such that either $2\le k<M$ 
or $k=M$ and $\alpha_M\ge 3$.

Here we obtain 
a new configuration 
by moving $\beta$ counters 
from level $k-1$ to $k$. 
If $\beta $ is negative, this means 
that we are moving $-\beta$ counters 
from level $k$ to level $k-1$. 
A natural constraint is  
\[
-\alpha_k\le\beta\le \alpha_{k-1}. 
\]

Here, we are interested in obtaining  a 
 worse configuration when the condition~\eqref{eq:merge}
 breaks down, that is 
\[
\alpha_{k-1}+\alpha_k
> N(B). 
\]
This means that the first factor in representation \eqref{Dk.beta} for $D_k(\beta)$ is negative. 
If $\alpha_{k}-\alpha_{k-1}$ is even,
then we can take 
\[
\beta=\frac{\alpha_{k-1}-\alpha_{k}}{2}
\] 
obtaining in the new configuration 
\[
\hat \alpha_k=\hat \alpha_{k-1}
=\frac{\alpha_k+\alpha_{k+1}}{2}.
\]
If the difference is odd, then we put 
$$
\beta=\frac{1}{2}+\frac{\alpha_{k-1}-\alpha_k}{2}.
$$
to obtain new configuration  
\[
\hat \alpha_{k}
=\alpha_k+\beta=
\frac{\alpha_k+\alpha_{k-1}+1}{2}
=\hat \alpha_{k-1}+1.
\]
Therefore, in this case we obtain a worse configuration by making the number of counters in levels $k-1$ and $k$ the same, or differing by one.

\section{Proof of Theorem~\ref{thm.upper.bound}}

Here, we pick $B=\frac{1}{2}$. Then, 
\[
N(B)=\frac{N+1}{2}. 
\]
This will make things easier to analyse, as we will have less cases to consider.

Consider first, $\alpha_M\in\{1,2\}$. 
If $\alpha_{M-1}+\alpha_{M-2}\le \frac{N+1}{2}$
we can obtain a worse configuration 
by merging level $M-1$ with $M-2$ 
using  Subsection~\ref{subsec:merge.not.1}. 
New configuration will have one level less and we keep repeating this procedure until 
\[
\alpha_{M-1}\le \frac{N+1}{2}, \quad 
\alpha_{M-1}+\alpha_{M-2}>
\frac{N+1}{2}.
\]
In the remaining  
levels we have 
\[
N-\alpha_M-\alpha_{M-1}-\alpha_{M-2}
< N-\frac{N+1}{2}\le \frac{N+1}{2}.
\]
Hence, we can merge the remaining levels into a single one. 

The same  reasoning applies when $\alpha_M\ge 3$. 
In this case we proceed until 
\[
\alpha_{M}+\alpha_{M-1}>
\frac{N+1}{2}
\]
and then merge the higher levels $\{1,2,\ldots, M-2\}$.

As a result 
either $M\le 3$ with $\alpha_M\ge 3$ or 
$M\le 4$ with $\alpha_M\in \{1,2\}$. 
After that we can rebalance the remaining $1,2$ or $3$ large levels to ensure 
that difference in a number of counters between them is at most $\frac{1}{2}$. 
Thus we have the following list of cases to analyse 
\begin{enumerate}[(i)]
\item $(N)$;
\item $(N-1,1)$;
\item $(N-2,2)$;
\item $\left(\left\lceil\frac{N-1}{2}\right\rceil,\left\lfloor\frac{N+1}{2}\right\rfloor\right)$;
\item $\left(\left\lceil\frac{N-2}{2}\right\rceil,\left\lfloor\frac{N}{2}\right\rfloor,1\right)$;
\item $\left(\left\lceil\frac{N-2}{2}\right\rceil,\left\lfloor\frac{N-2}{2}\right\rfloor,2\right)$;
\item $\left(\frac{N}{3},\frac{N}{3},\frac{N}{3}\right)$ if $N$ is divisible by $3$, 
$\left(\frac{N+2}{3},\frac{N-1}{3},\frac{N-1}{3}\right)$
if $N\mod 3 =1$, and 
$\left(\frac{N+1}{3},\frac{N+1}{3},\frac{N-2}{3}\right)$
if $N\mod 3 =2$;
\item $\left(\frac{N-1}{3},\frac{N-1}{3},\frac{N-1}{3},1\right)$ if $N\mod 3=1$, 
$\left(\frac{N+1}{3},\frac{N-2}{3},\frac{N-2}{3},1\right)$
if $N\mod 3 =2$, and 
$\left(\frac{N}{3},\frac{N}{3},\frac{N-3}{3},1\right)$
if $N\mod 3 =0$;
\item $\left(\frac{N-2}{3},\frac{N-2}{3},\frac{N-2}{3},2\right)$ if $N\mod 3=2$, 
$\left(\frac{N-1}{3},\frac{N-1}{3},\frac{N-4}{3},2\right)$
if $N\mod 3 =1$, and 
$\left(\frac{N}{3},\frac{N-3}{3},\frac{N-3}{3},2\right)$
if $N\mod 3 =0$;
\end{enumerate}

\subsection{Asymptotic analysis}

We can see that in the list of cases we have 
$k$ large levels for $k=1,2,3$, which might be followed by level of size $1$ or $2$. 

Thus, we need to analyse  
\begin{multline*}
\sum_{i=1}^k E^*\left(\frac{(i-1)N}{k}+\overline c_{i-1},\frac{N}{k}+c_k\right)
+E^*\left(N-1,1\right)
I(\alpha_M=1)
+E^*\left(N-2,2\right)
I(\alpha_M=2)
\\
-
\left(
\frac{
2f(N-1)I(\alpha_M=2)+
2(N-1)f(N-1)I(\alpha_M=1)}{N(N-1)}
\right),
\end{multline*}
where $\overline{c}_{i-1}$ and $c_k$ are some bounded constants. We have,
\begin{align*}
E^*(l,c) 
&=\frac{c(2l+c-1)\Delta f(l-1)
+
c(c-1)
\Delta f(l)
-c(c-1)
}{N(N-1)}\\
&=
\frac{c\left(
A(4l^2+(4c-6)l)
+B(2l+2c-2)+1-c
\right)
}{N(N-1)}
\end{align*}
We can see then that as $N\to\infty$, 
since $A\sim -\frac{1}{2N}$,
\[
E^*(N-1,1) \to 0,
\]
so this additional level does not make difference asymptotically. 
Also, 
\[
E^*\left(\frac{(i-1)N}{k}+\overline c_{i-1},\frac{N}{k}+c_k\right)\sim 
E^*\left(\frac{(i-1)N}{k},\frac{N}{k}\right)
.
\]
Since the contribution of terms with indicator 
is negative and the term with $I(\alpha_M=2)$ is negligible, we can conclude that asymptoically 
it is sufficient to analyse the following  three situations:
\begin{itemize}
\item $(N,1)$;
\item $(N/2,N/2,1)$;
\item $(N/3,N/3,N/3,1)$.
\end{itemize}
Note for $i\le k, k=1,2,3,$, as $N\to\infty$, 
since $A\sim -\frac{1}{2N}$, 
\begin{align*}
&E^*\left(\frac{(i-1)N}{k},\frac{N}{k}\right)\\
&\phantom{XX}\sim 
\frac{1}{k}
\frac{-\frac{1}{2N}\left(4\frac{(i-1)^2}{k^2}N^2
+4\frac{i-1}{k^2}N^2
\right)
+2B\frac{i}{k}N
-\frac{N}{k}
}{N}\\
&
\phantom{XX}\sim -\frac{2i(i-1)}{k^3}
+2B\frac{i}{k^2}
-\frac{1}{k^2}
.
\end{align*}
Hence, 
\begin{multline*}
\sum_{i=1}^k E^*\left(\frac{(i-1)N}{k},\frac{N}{k}\right)
-\frac{2(N-1)f(N-1)}{N(N-1)}\\
\sim 
\sum_{i=1}^k
\left(
-\frac{2i(i-1)}{k^3}
+2B\frac{i}{k^2}
-\frac{1}{k^2}
\right)
+1-2B\\
=
-2\frac{k^2-1}{3k^2}
+B\frac{k+1}{k}
-\frac{1}{k}
+1-2B=: S(k).
\end{multline*}
Recalling that we take $B=1/2$, we have
\[
S(k)=
-2\frac{k^2-1}{3k^2}
+\frac{k+1}{2k}-\frac{1}{k}.
\]
Then,  
\[
S(2)= -\frac{1}{4},\quad 
S(3)= -\frac{7}{27},\quad 
S(4)= -\frac{1}{4},
\]
hence the global minimum is $-\frac{7}{27}$.

\section{Lower bound}
In this section we prove Theorem \ref{thm.lower.bound}. Our analysis again relies on a careful consideration of worst-case scenarios. To prove the lower bound, we reverse the arguments of Section \ref{subsec:merge.not.1}. It follows from the calculations there that, if $\alpha_i \le (N-2)(1-B)+\dfrac32$, then we obtain a worse (in terms of the lower bound) configuration by splitting level $i$ into singletons. Note also that, if we take $B>1/2$, there can be at most one level with more than $(N-2)(1-B)+\dfrac32$ counters. As a result, we need to consider the cases when we have either only singletons or one large level and singletons. 

To prove the lower bound, we will use test function 
\[
g(x) = -\frac{1}{2(N-2)} (x^2-1)+B(x+1). 
\]

\subsection{Singletons only}
In this case $\alpha_1=2, $ and $\alpha_2=\alpha_3=\ldots=\alpha_{N-1}=1$. 
Then 
\[
\E[\Delta X_k]= -\frac{1}{{N \choose 2}} = 
-\frac{2}{N(N-1)}
\]
for $k=1,\ldots, N-2$. 
Then our $v_g$ should satisfy 
\[
v_g+\sum_{k=1}^{N-2} g(k) \E[\Delta X_k] 
\le 1+\frac{2}{N(N-1)}.
\]
We have, as $N\to \infty$,
\begin{multline*}
v_g-\frac{2}{N(N-1)}\sum_{k=1}^{N-2} g(k)\\
=v_g
-\frac{2}{N(N-1)}
\left(
-\frac{(N-1)(2N-3)}{12}
+\frac{1}{2}
+B\frac{(N-2)(N-1)}{2}
+B(N-2)
\right)\\
=v_g+\left(\frac13 -B\right)(1+o(1))
\le 1+o(1).
\end{multline*}
Hence, 
\[
v_g\le \frac{2}{3}+B. 
\]

\subsection{A large level and singletons}

In this situation we have 
\begin{align*}
& \alpha_1=2,\\
& \alpha_k=1 \text{ for } k=2,\ldots,m,\\ 
& \alpha_{m+1}=N(1-B)+j, \\
& \alpha_k=1 \text{ for } k=m+2,\ldots,NB-j-m-1.
\end{align*}
We have the following drifts
\begin{align*}
& \E[\Delta X_k]= -\frac{1}{{N \choose 2}} = 
-\frac{2}{N(N-1)}
\text{ for } k=1,\ldots,m-1, \\
& \E[\Delta X_m] = \frac{m}{{N\choose 2}} - \frac{(N(1-B)+j)(m+1)}{{N \choose 2}} - \frac{{N(1-B)+j \choose 2}}{{N \choose 2}},\\
& \E[\Delta X_{m+1}] = \frac{(N(1-B)+j)(m+1)}{{N \choose 2}},\\
& \E[\Delta X_{m+2}] = \frac{{N(1-B)+j \choose 2}}{{N \choose 2}},\\
& \E[\Delta X_k]= -\frac{1}{{N \choose 2}} = 
-\frac{2}{N(N-1)} \text{ for } k\ge N(1-B)+j+m.    
\end{align*}
As $v_g$ should in this situation satisfy
\begin{align*}
v_g+\sum_{k=1}^{N-2} g(k) \E[\Delta X_k] 
& \le 1+ \frac{1}{{N \choose 2}} + \frac{{N(1-B)+j \choose 2}}{{N \choose 2}}\\
& = 1+ \frac{{N(1-B)+j \choose 2}}{{N \choose 2}} + o(1),
\end{align*}
with notation $z=1-B+j/N$ and $y=m/N$, we have
$$
v_g \le G(B,y,z) +o(1),
$$
where
\begin{align*}
G(B,y,z) & = -\frac{y^3}{3} + By^2 - 2zy\left(B-y\right) - 2z^2(B-y) - 2y\left(By-y^2/2\right)\\
& \phantom{XX}+2\left(-\frac{1}{2}\frac{1-(y+z)^3}{3}+B\frac{1-(y+z)^2}{2} \right) + 1+z^2\\
& = 2/3 + B + z^2(1-2B) + \frac23 y^3 - By^2 - 2Bzy \\
& \phantom{XX}+2zy^2 + 2z^2y + \frac13 (y+z)^3 -B(y+z)^2.
\end{align*}
We now differentiate the RHS of the above with respect to $y$ to obtain
\begin{align*}
\frac{\partial G(B,y,z)}{\partial y} & = 2y^2-2By - 2Bz+4zy+2z^2+(y+z)^2-2B(y+z)\\
& = 3y^2 + y(6z-4B) + 3z^2 - 4Bz \ge 0
\end{align*}
for all $y \ge 0$ as long as $B\le 3/7$ (recall here that $z \ge 1-B$).

We therefore have that
$$
v_g \le H(B,z)+o(1),
$$
where
$$
H(B,z) = G(B,0,z) = 2/3 + B + z^2(1-3B) + 1/3z^3.
$$
Note that
$$
\frac{\partial H(B,z)}{\partial z} = 2z(1-3B) + z^2 = 0
$$
for $z = 2(3B-1)$. At this value of $z$ the function attains its minimal value and one can readily check that
$2(3B-1) \in [1-B, 1]$ as long as $B \in [3/7,1/2]$. With the exact expression for this minimal value, we can write
$$
v_g \le 2/3 + B - \frac43 (3B-1)^3 +o(1).
$$
We now differentiate the above to obtain
$$
1 - 12(3B-1)^2,
$$
which equals $0$ at $B = \frac13 + \frac{\sqrt{3}}{18} \in [3/7,1/2]$. It is clear that the function in $B$ attains its maximum at this value and the maximum is equal to
$$
1 + \frac{\sqrt{3}}{27} = 1 + 3^{-5/2},
$$
which proves Theorem \ref{thm.lower.bound}.

\appendix

\section{Non-asymptotic upper bounds}
Let 
\[
S(\alpha) := \sum_{i=0}^{M-1} E^*(l_i,\alpha_{i+1}) -\frac{2}{N(N-1)} \Bigl( f(N-1)I(\alpha_M=2) +(N-1)f(N-1)I(\alpha_M=1) \Bigr) 
\]
We have computed 
 $S(\alpha)$ in all $9$ cases and subcases 
 using the Matlab. 
\begin{enumerate}[(i)]
\item $\alpha = (N)$: $S(\alpha) = 0$
\item $\alpha = (N-1, 1)$: $S(\alpha) = -\frac{N-1}{N\,\left(N-2\right)}$
\item $\alpha = (N-2, 2)$: $S(\alpha) = -\frac{2}{N}$
\item N even: $\alpha = (\frac{N}{2}, \frac{N}{2})$, $S(\alpha) = -\frac{N}{4\,\left(N-2\right)}$; N odd: $\alpha = (\frac{N}{2}-\frac{1}{2}, \frac{N}{2}+\frac{1}{2})$, $S(\alpha) = -\frac{N^2-1}{4\,N\,\left(N-2\right)}$
\item N even: $\alpha = (\frac{N}{2}, \frac{N}{2}-1, 1)$, $S(\alpha) = \frac{-N^3+N^2+2\,N-4}{4\,N\,\left(N^2-3\,N+2\right)}$; N odd: $\alpha = (\frac{N}{2}+\frac{1}{2}, \frac{N}{2}-\frac{1}{2}, 1)$, $S(\alpha) = -\frac{{\left(N+1\right)}^3}{4\,N\,\left(N^2-3\,N+2\right)}$
\item N even: $\alpha = (\frac{N}{2}-1, \frac{N}{2}-1, 2)$, $S(\alpha) = -\frac{N^2+N+2}{4\,N\,\left(N-1\right)}$; N odd: $\alpha = (\frac{N}{2}-\frac{1}{2}, \frac{N}{2}-\frac{3}{2}, 2)$, $S(\alpha) = -\frac{N^2-1}{4\,N\,\left(N-2\right)}$
\item N mod 3 = 0: $\alpha = (\frac{N}{3}, \frac{N}{3}, \frac{N}{3})$, $S(\alpha) = -\frac{N\,\left(7\,N-9\right)}{27\,\left(N^2-3\,N+2\right)}$; N mod 3 = 1: $\alpha = (\frac{N}{3}+\frac{2}{3}, \frac{N}{3}-\frac{1}{3}, \frac{N}{3}-\frac{1}{3})$, $S(\alpha) = \frac{-7\,N^2+2\,N+5}{27\,N\,\left(N-2\right)}$; N mod 3 = 2: $\alpha = (\frac{N}{3}+\frac{1}{3}, \frac{N}{3}+\frac{1}{3}, \frac{N}{3}-\frac{2}{3})$, $S(\alpha) = \frac{-7\,N^3+9\,N^2+3\,N-13}{27\,N\,\left(N^2-3\,N+2\right)}$
\item N mod 3 = 0: $\alpha = (\frac{N}{3}, \frac{N}{3}, \frac{N}{3}-1, 1)$, $S(\alpha) = -\frac{7\,N^3-12\,N^2+27}{27\,N\,\left(N^2-3\,N+2\right)}$; N mod 3 = 1: $\alpha = (\frac{N}{3}-\frac{1}{3}, \frac{N}{3}-\frac{1}{3}, \frac{N}{3}-\frac{1}{3}, 1)$, $S(\alpha) = \frac{-7\,N^2+5\,N+2}{27\,N\,\left(N-2\right)}$; N mod 3 = 2: $\alpha = (\frac{N}{3}+\frac{1}{3}, \frac{N}{3}-\frac{2}{3}, \frac{N}{3}-\frac{2}{3}, 1)$, $S(\alpha) = -\frac{7\,N^3-12\,N^2+19}{27\,N\,\left(N^2-3\,N+2\right)}$
\item N mod 3 = 0: $\alpha = (\frac{N}{3}, \frac{N}{3}-1, \frac{N}{3}-1, 2)$, $S(\alpha) = -\frac{7\,N^3-15\,N^2+27\,N-27}{27\,N\,\left(N^2-3\,N+2\right)}$; N mod 3 = 1: $\alpha = (\frac{N}{3}-\frac{1}{3}, \frac{N}{3}-\frac{1}{3}, \frac{N}{3}-\frac{4}{3}, 2)$, $S(\alpha) = -\frac{7\,N^2-8\,N+19}{27\,N\,\left(N-2\right)}$; N mod 3 = 2: $\alpha = (\frac{N}{3}-\frac{2}{3}, \frac{N}{3}-\frac{2}{3}, \frac{N}{3}-\frac{2}{3}, 2)$, $S(\alpha) = -\frac{7\,N^2-N+28}{27\,N\,\left(N-1\right)}$
\end{enumerate}



Comparing these values we have arrived to the upper bounds in the Remark~\ref{rem.finite.case}.

\section{Numerical results}

\subsection{Optimal Lyapunov function}
It is possible to obtain slightly better upper bounds for $V(N)$ by solving a linear optimisation problem in order 
to find  a Lyapunov 
function that is better than $f$ \
defined in~\eqref{defn.f}. 

Function $f$ is given in the table below. 

\begin{table}[h!]
\hspace*{-5cm}
\caption{Optimal function  $f(k)$ for each $N$.}
\resizebox{\textwidth}{!}{%
\begin{tabular}{|c|ccccccccccccccc|}
\hline
$N$ & $f(1)$ & $f(2)$ & $f(3)$ & $f(4)$ & $f(5)$ & $f(6)$ & $f(7)$ & $f(8)$ & $f(9)$ & $f(10)$ & $f(11)$ & $f(12)$ & $f(13)$ & $f(14)$ & $f(15)$  \\ \hline
4 & 0.57 & 0.43 & 1.43 &  &  &  &  &  &  &  &  &  &  &  &  \\
5 & 0.78 & 0.79 & 0.59 & 1.37 &  &  &  &  &  &  &  &  &  &  &  \\
6 & 0.77 & 0.88 & 0.82 & 0.57 & 1.35 &  &  &  &  &  &  &  &  &  &  \\
7 & 0.86 & 1.04 & 1.07 & 0.94 & 0.59 & 1.33 &  &  &  &  &  &  &  &  &  \\
8 & 0.95 & 1.19 & 1.29 & 1.26 & 1.03 & 0.63 & 1.32 &  &  &  &  &  &  &  &  \\
9 & 0.90 & 1.17 & 1.33 & 1.36 & 1.28 & 1.03 & 0.61 & 1.31 &  &  &  &  &  &  &  \\
10 & 0.96 & 1.26 & 1.45 & 1.55 & 1.53 & 1.37 & 1.07 & 0.62 & 1.30 &  &  &  &  &  &  \\
11 & 0.99 & 1.32 & 1.55 & 1.69 & 1.72 & 1.64 & 1.44 & 1.11 & 0.63 & 1.30 &  &  &  &  &  \\
12 & 0.97 & 1.31 & 1.56 & 1.74 & 1.82 & 1.82 & 1.69 & 1.46 & 1.11 & 0.62 & 1.29 &  &  &  &  \\
13 & 0.99 & 1.34 & 1.62 & 1.82 & 1.94 & 1.98 & 1.92 & 1.76 & 1.50 & 1.12 & 0.63 & 1.29 &  &  &  \\
14 & 1.02 & 1.39 & 1.69 & 1.92 & 2.07 & 2.15 & 2.13 & 2.03 & 1.84 & 1.55 & 1.15 & 0.63 & 1.28 &  &  \\
15 & 1.00 & 1.39 & 1.70 & 1.95 & 2.13 & 2.24 & 2.28 & 2.23 & 2.10 & 1.88 & 1.57 & 1.15 & 0.63 & 1.28 &  \\
16 & 1.01 & 1.40 & 1.73 & 1.99 & 2.19 & 2.32 & 2.39 & 2.38 & 2.30 & 2.14 & 1.89 & 1.57 & 1.15 & 0.63 & 1.28 \\
\hline
\end{tabular}
}
\end{table}
\begin{table}[h!]
\caption{Comparison of  theoretical and numerical bounds with simulated velocity.}
\resizebox{\textwidth}{!}{%
\begin{tabular}{|l|*{13}{c|}}
\hline
$N$ & 4 & 5 & 6 & 7 & 8 & 9 & 10 & 11 & 12 & 13 & 14 & 15 & 16 \\ \hline
Theoretical bound & 1.5 & 1.4 & 1.367 & 1.343 & 1.33 & 1.321 & 1.312 & 1.307 & 1.303 & 1.298 & 1.295 & 1.293 & 1.29 \\ \hline
Numerical bound & 1.43 & 1.37 & 1.35 & 1.33 & 1.32 & 1.31 & 1.30 & 1.30 & 1.29 & 1.29 & 1.28 & 1.28 & 1.28 \\ \hline
Simulated velocity & - & 1.35 & 1.33 & 1.31 & 1.3 & 1.29 & - & - & - & - & - & - & 1.27 \\ \hline
\end{tabular}
}
\end{table}


One can see for all $N$
that  parabola fits numerical data  very accurately. 
For example, when $N=16$ then 
\begin{align*}
f(k) &= -0.0405k^{2} + 0.5439k + 0.4574\\
&= -0.0405(k - 6.71)^{2} + 2.28
\end{align*}
results in the following 
graph. 
\begin{figure}[!h]
    \centering
\includegraphics[width=0.8\textwidth]{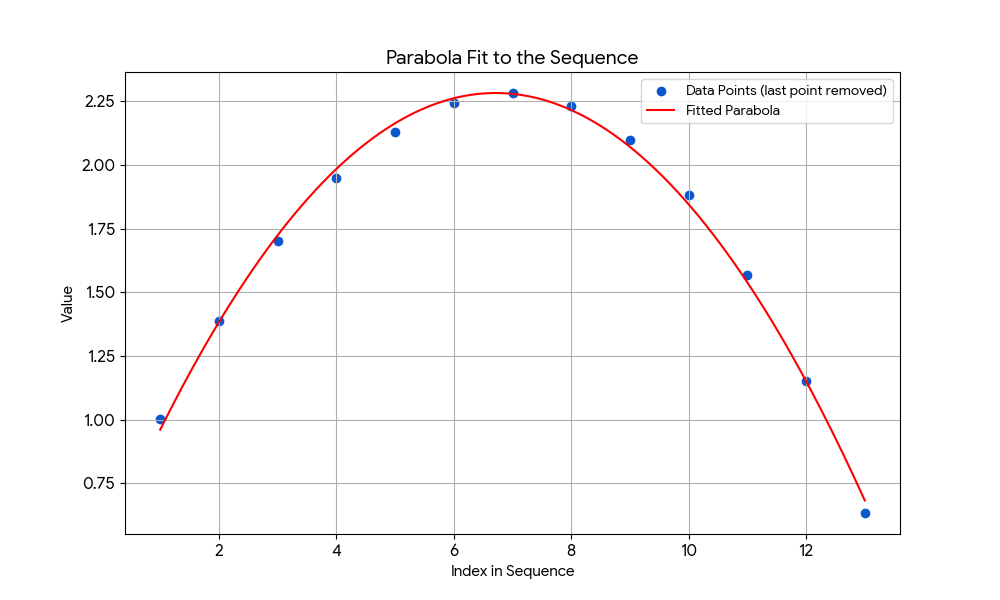}
    \caption{Fitting of $f$ by a parabola for $N=16$.}
    \label{15}
\end{figure}

\bibliographystyle{abbrv}
\bibliography{linear_programming}

\end{document}